\newtheorem{theorem}{Theorem}[section]
\newtheorem{corollary}[theorem]{Corollary}
\theoremstyle{definition}
\newtheorem{definition}[theorem]{Definition}
\newtheorem{example}[theorem]{Example}
\theoremstyle{remark}
\newtheorem{remark}[theorem]{Remark}
\numberwithin{equation}{section}
\begin{document}
	
	\setcounter{page}{1}
	
	\title[FPT on metric type spaces]{Weak contractions via $\lambda$-sequences.}

	\author[Collins Amburo Agyingi]{Collins Amburo Agyingi$^{1,3}$}
	
	\author[Ya\'e Ulrich Gaba]{Ya\'e Ulrich Gaba$^{1,2,3,\dagger}$}

	\address{$^{1}$ Department of Mathematical Sciences, North West University, Private Bag
		X2046, Mmabatho 2735, South Africa.}

	\address{$^{2}$ Institut de Math\'ematiques et de Sciences Physiques (IMSP), 01 BP 613 Porto-Novo, B\'enin.}

	\address{$^{3}$ African Center for Advanced Studies (ACAS),
		P.O. Box 4477, Yaounde, Cameroon.}

	\email{\textcolor[rgb]{0.00,0.00,0.84}{collins.agyingi@nwu.ac.za
	}}
	
	\email{\textcolor[rgb]{0.00,0.00,0.84}{yaeulrich.gaba@gmail.com
	}}

	\subjclass[2010]{Primary 47H05; Secondary 47H09, 47H10.}
	
	\keywords{metric type space, fixed point, $\lambda$-sequence.}

	\subjclass[2010]{Primary 47H10; Secondary 54H25.}
	
	\keywords{metric type space, common fixed point, $\lambda$-sequence.}
	
	\date{Received: xxxxxx; Accepted: zzzzzz.
		\newline \indent $^{\dagger}$Corresponding author}
	
	\begin{abstract}
		In this note, we discuss common fixed point for a family of self mapping defined on a metric type space and satisfying a weakly contractive condition. In our development, we make use of the $\lambda$-sequence approach and also of a certain class of real valued maps. We derive some implications for self-mappings on  quasi-pseudometric type spaces.
	\end{abstract} 
	
	\maketitle
	
	\section{Introduction and preliminaries}
	
Weakly $C$-contractive maps have been introduced by Choudhury\cite{bina1} as a modified version of the contactive conditions proposed by Chatterjee\cite{chatte} and Kannan\cite{kan1,kan2}. All these different types of contractive conditions have been showed to be independent of one another (see \cite{rod}) and most of these results have been discussed in considering a single self-map on a metric space. We intend, in the present, to extend the result of Choudhury\cite{bina1} by considering a family of self-maps on a metric type space. The paper by Jovanovi\'c et al.\cite{jova} is an interesting reference regarding common fixed point in metric type spaces.

The following is the definition in a metric space of a weakly $C$-contractive map as it appears in Choudhury\cite[Definition 1.3]{bina1}.

\begin{definition}
	A mapping $T : (X, m) \to (X, m)$, where $(X, m)$ is a metric space is said to be weakly $C$-contractive
	or is a weak $C$-contraction if for all $x, y \in X$,
	the following inequality holds:
	
	\begin{equation*}
	m(Tx,Ty) \leq \frac{1}{2}[m(x,Ty)+m(y,Tx)]- \psi(m(x,Tx),m(y,Ty)),
	\end{equation*}
	where $\psi:[0,\infty)^2 \to [0,\infty)$ is a continuous mapping such that $%
	\psi(x,y)=0$ if and only if $x=y=0$.
\end{definition}	
	
It is important to point out that weak $C$-contractions constitute a very large class of mappings that contains the above mentioned ones.
		
Next, we recall the definition of a metric type space.

\begin{definition}(See \cite[Definition 1.1]{gab})
	Let $X$ be a nonempty set, and let the function $D:X\times X \to [0,\infty)$ satisfy the following properties:
	\begin{itemize}
		\item[(D1)] $D(x,x)=0$ for any $x \in X$;
		\item[(D2)] $D(x,y)=D(y,x)$ for any $x,y\in X$;
		\item[(D3)] $D(x,y) \leq K \big( D(x,z_1)+D(z_1,z_2)+\cdots+D(z_n,y) \big)$ for any points $x,y,z_i\in X,\ i=1,2,\ldots, n$ where $n\geq 1$ is a fixed natural number and some constant $K\geq 1$.
	\end{itemize}
	The triplet $(X,D,K)$ is called a \textbf{metric type space}.
\end{definition}
The class of metric type spaces is strictly larger than that of metric spaces (see \cite{niyi-gaba}). The concepts of Cauchy sequence, convergence for a sequence and completeness in a metric type space are defined in the same way as defined for a metric space.

We also recall the definition of a $\lambda$-sequence.

\begin{definition}\label{def1}(See \cite[Definition 2.1]{gab})
	A sequence $(x_n)_{n\geq 1}$ in a metric type space $(X,D,K)$ is a $\lambda$-sequence if there exist $0<\lambda<1$ and $n(\lambda) \in \mathbb{N}$ such that $$\sum_{i=1}^{L} D(x_i,x_{i+1}) \leq \lambda L \text{ for each } L\geq n(\lambda).$$
\end{definition}

Finally, we denote by $\Phi$ be the class of continuous, non-decreasing, sub-additive and homogeneous functions $F:[0,\infty) \to [0,\infty)$ such that $F^{-1}(0)=\{0\}$.

	We can now state our fixed point results.	
	
	\section{Main results}


\vspace*{0.3cm}

\subsection{Common fixed point theorems ( Kannan-Choudhury case)}
	
\hspace*{2cm}	

\vspace*{0.2cm}
	
	In this section, we prove existence of a unique common
	fixed point for a family of contractive type self-maps on a
	complete metric type space by using the Kannan contractive condition as a base.

\begin{theorem}\label{main}

Let $(X,D,K)$ be a complete metric type space and $\{T_n\}$ be a sequence of self mappings on $X$. Assume that there exist two sequences $(a_n)$ and $(b_n)$ of elements of $X$ such that

\begin{align}\label{conditionqq}
F(D(T_i(x),T_j(y))) \leq \  & F(\delta_{i,j}[D(x,T_i(x))+ D(y,T_j(y))])\\
&- F(\gamma_{i,j}\psi[D(x,T_i(x)),D(y,T_j(y))]) \nonumber
\end{align}

for $x,y\in X$ with $x\neq y,$ $0\leq \delta_{i,j},\gamma_{i,j}<1 , \ i,j = 1,2,\cdots ,$ and for some $F \in \Phi$ homogeneous with degree $s$, where $\delta_{i,j}=D(a_i,a_j)$, $\gamma_{i,j}=D(b_i,b_j)$, and $\psi:[0,\infty)^2 \to [0,\infty)$ is a continuous mapping such that $%
\psi(x,y)=0$ if and only if $x=y=0$. If the sequence $(s_n)$ where $s_i=\frac{\delta_{i,i+1}^s }{1-\delta_{i,i+1}^s}$ is a non-increasing $\lambda$-sequence of elements of $\mathbb{R}^+=[0,+\infty)$ endowed with the $\max\footnote{The max metric $m$ refers to $m(x,x)=0$ and $m(x,y)=\max\{x,y\}$ if $x\neq y$ for any $x,y \in [0,\infty)$}$ metric,
then $\{T_n\}$ has a unique common fixed point in $X$. 

\end{theorem}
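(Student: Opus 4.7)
\emph{Proof outline.}
The plan is to build a Picard sequence for the family $\{T_n\}$, show it is Cauchy, and then identify its limit as the required common fixed point. Fix $x_0\in X$ and set $x_n:=T_n(x_{n-1})$ for $n\geq 1$ (the degenerate situation $x_{n-1}=x_n$ at some stage can be handled separately and quickly forces $x_{n-1}$ to be common fixed). Substituting $x=x_{n-1}$, $y=x_n$, $i=n$, $j=n+1$ into (\ref{conditionqq}), so that $T_i(x)=x_n$ and $T_j(y)=x_{n+1}$, and then invoking the homogeneity $F(\alpha t)=\alpha^s F(t)$, the sub-additivity of $F$, and discarding the non-negative penalty $F(\gamma_{n,n+1}\psi[\cdot,\cdot])$, I get
\begin{equation*}
(1-\delta_{n,n+1}^s)\,F(D(x_n,x_{n+1}))\leq \delta_{n,n+1}^s\,F(D(x_{n-1},x_n)),
\end{equation*}
that is $F(D(x_n,x_{n+1}))\leq s_n\,F(D(x_{n-1},x_n))$; iterating yields $F(D(x_n,x_{n+1}))\leq \bigl(\prod_{i=1}^n s_i\bigr)F(D(x_0,x_1))$.

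Because $(s_n)$ is a non-increasing $\lambda$-sequence in $([0,\infty),m)$, the Ces\`aro-type bound $\sum_{i=1}^L m(s_i,s_{i+1})\leq \lambda L$ for $L\geq n(\lambda)$ together with monotonicity forces $s_L\leq \lambda<1$ eventually, so the product $\prod_{i=1}^n s_i$ decays geometrically. Hence $F(D(x_n,x_{n+1}))\to 0$; combined with $F^{-1}(0)=\{0\}$ and the continuity and monotonicity of $F$, this gives $D(x_n,x_{n+1})\to 0$ at a geometric rate. Using the polygonal axiom (D3) in tandem with the $\lambda$-sequence machinery from~\cite{gab}, I would then upgrade this to the Cauchy property of $(x_n)$, and completeness provides $x^*\in X$ with $x_n\to x^*$.

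For the fixed-point identity, I would apply (\ref{conditionqq}) with $x=x_n$, $y=x^*$, $i=n+1$, $j=k$, and after sub-additivity and homogeneity obtain
\begin{equation*}
F(D(x_{n+1},T_k(x^*)))\leq \delta_{n+1,k}^s\bigl[F(D(x_n,x_{n+1}))+F(D(x^*,T_k(x^*)))\bigr]-\gamma_{n+1,k}^s F(\psi[D(x_n,x_{n+1}),D(x^*,T_k(x^*))]).
\end{equation*}
Letting $n\to\infty$, using $D(x_n,x_{n+1})\to 0$, continuity of $F$ and $\psi$, the bound $\delta_{n+1,k}<1$, and the crucial fact that $\psi(0,t)>0$ whenever $t>0$, the strictly positive penalty forces $F(D(x^*,T_k(x^*)))=0$, hence $x^*=T_k(x^*)$ for every $k$. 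Uniqueness: if $y^*$ is another common fixed point with $y^*\neq x^*$, apply (\ref{conditionqq}) with $x=x^*$, $y=y^*$; both self-distance terms vanish, the first term on the right is $0$, and the $\psi$-penalty alone yields $F(D(x^*,y^*))\leq -F(\gamma_{i,j}\psi[0,0])=0$, contradicting $D(x^*,y^*)>0$.

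The main obstacle I anticipate is the Cauchy argument in step two: in a metric type space axiom (D3) only chains a fixed number of intermediate points with universal constant $K$, so upgrading the geometric decay of consecutive-term distances to an estimate on $D(x_n,x_{n+p})$ for arbitrary $p$ is not automatic and is precisely where the $\lambda$-sequence framework of~\cite{gab} does the heavy lifting. A secondary delicate point is the limiting argument in step three, where $\delta_{n+1,k}$ stays indexed by $n$; rather than merely absorbing a factor $\delta^s<1$ into the left-hand side, one must actively exploit the strict positivity of $\psi$ off the diagonal to drive $D(x^*,T_k(x^*))$ to zero.
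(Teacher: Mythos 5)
Your outline reproduces the paper's proof essentially step for step: the same Picard iteration $x_n=T_n(x_{n-1})$, the same bound $F(D(x_n,x_{n+1}))\leq\bigl(\prod_{i=1}^{n}s_i\bigr)F(D(x_0,x_1))$, geometric decay of the product extracted from the $\lambda$-sequence hypothesis, the polygon inequality to get the Cauchy property, and the same limiting and uniqueness arguments. The only (harmless) variations are that you deduce $s_L\leq\frac{1}{L}\sum_{i=1}^{L}s_i\leq\lambda$ from monotonicity where the paper invokes the arithmetic--geometric mean inequality to get $\prod_{i=1}^{k}s_i\leq\lambda^{k}$, and that the obstacle you flag in the Cauchy step is resolved in the paper exactly as you suspect: one applies $F$ to the chain $D(x_n,x_{n+p})\leq K\sum_{k}D(x_{n+k},x_{n+k+1})$ and sums the geometrically decaying values $F(D(x_{n+k},x_{n+k+1}))$ to obtain $F(D(x_n,x_{n+p}))\leq K^{s}\frac{\lambda^{n}}{1-\lambda}F(D(x_0,x_1))$, so that the final fixed-point step needs only $\delta_{n,m}^{s}<1$ (the paper simply discards the $\psi$-penalty there rather than using its strict positivity, which in any case could be defeated by $\gamma_{i,j}=0$).
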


\begin{proof}
	
	For any $x_0\in X$, we construct the Picard type sequence $(x_n)$ by setting $x_n= T_n(x_{n-1}),\ n=1,2,\cdots .$ Using \eqref{conditionqq} and the homogeneity of $F$, we obtain
	
	\begin{align*}
	F(D(x_1,x_2))  = \ & F(D(T_1(x_0),T_2(x_1))) \\
	\leq \ &\delta_{1,2}^s F([D(x_0,T_1(x_0)))+ D(x_1,T_2(x_1))]) \\
	& - \gamma_{1,2}^sF(\psi[D(x_0,T_1(x_0))), D(x_1,T_2(x_1))]) \\
	= \ & \delta_{1,2}^sF([D(x_0,x_1)+ D(x_1,x_2)] )-  \gamma_{1,2}^sF(\psi[D(x_0,x_1), D(x_1,x_2)])\\
	 \leq & \delta_{1,2}^sF([D(x_0,x_1)+ D(x_1,x_2)] )
	\end{align*}
	
	Therefore, using the sub-additivity of $F$, we deduce that
	\[ (1-\delta_{1,2}^s)F(D(x_1,x_2)) \leq (\delta_{1,2}^s)   F(D(x_0,x_1)),\] 
	i.e.
	
	\[  F(D(x_1,x_2)) \leq \left(\frac{\delta_{1,2}^s}{1-\delta_{1,2}^s}   \right) F(D(x_0,x_1)) .\]
	
	Also, we get
	
	\begin{align*}
	F(D(x_2,x_3)) & = F(D(T_2(x_1),T_3(x_2))) \\
	& \leq \left(\frac{\delta_{2,3}^s}{1-\delta_{2,3}^s}   \right) F(D(x_1,x_2)) \\
	& \leq  \left(\frac{\delta_{2,3}^s}{1-\delta_{2,3}^s}   \right) \left(\frac{\delta_{1,2}^s}{1-\delta_{1,2}^s}   \right) F(D(x_0,x_1)).
	\end{align*}
	
	By repeating the above process, we have
	
	\begin{equation}
	F(D(x_n,x_{n+1})) \leq \prod\limits_{i=1}^n \left(\frac{\delta_{i,i+1}^s}{1-\delta_{i,i+1}^s}   \right)  F(D(x_0,x_1)).
	\end{equation}

Hence we derive, by making use of the the triangle inequality and the properties of $F$, that for $p>0$

\begin{align*}
F(D(x_n,x_{n+p})) \leq \ & K^s[F(D(x_n,x_{n+1})) +F(D(x_{n+1},x_{n+2})) \\
& + \ldots + F(D(x_{n+p-1},x_{n+p}))]  \\
\leq \ & K^s\left[\prod\limits_{i=1}^n \left(\frac{\delta_{i,i+1}^s}{1-\delta_{i,i+1}^s}   \right)  F(D(x_0,x_1)) \right. \\
& + \prod\limits_{i=1}^{n+1} \left(\frac{\delta_{i,i+1}^s}{1-\delta_{i,i+1}^s}   \right)  F(D(x_0,x_1))        \\
& + \ldots + \\
& + \left.\prod\limits_{i=1}^{n+p-1} \left(\frac{\delta_{i,i+1}^s}{1-\delta_{i,i+1}^s}   \right)  F(D(x_0,x_1))\right] \\
= \ & K^s\left[ \sum\limits_{k=0}^{p-1} \prod\limits_{i=1}^{n+k} \left(\frac{\delta_{i,i+1}^s}{1-\delta_{i,i+1}^s}   \right)  F(D(x_0,x_1))\right] \\
= \ & K^s\left[\sum\limits_{k=n}^{n+p-1} \prod\limits_{i=1}^{k} \left(\frac{\delta_{i,i+1}^s}{1-\delta_{i,i+1}^s}   \right)  F(D(x_0,x_1))\right].
\end{align*}

Now, let $\lambda$ and $n(\lambda)$ as in Definition \ref{def1}, then for $n\geq n(\lambda)$ and using the fact that the geometric mean of non-negative real numbers is at most their arithmetic mean, it follows that 

\begin{align}
F(D(x_n,x_{n+p})) \leq \ & K^s\left[\sum\limits_{k=n}^{n+p-1} \left[ \frac{1}{k}\sum\limits_{i=1}^{k} \left(\frac{\delta_{i,i+1}^s}{1-\delta_{i,i+1}^s}   \right) \right]^k F(D(x_0,x_1))\right] \\
\leq \ & K^s\left[ \left(\sum\limits_{k=n}^{n+p-1} \lambda^k \right) F(D(x_0,x_1))\right] \nonumber \\
\leq \ & K^s \frac{\lambda^n}{1-\lambda}F(D(x_0,x_1)) \nonumber .
\end{align}
Letting $n\to \infty$ and since $F^{-1}(0)=\{0\}$ and $F$ is continuous, we deduce that $D(x_n,x_{n+p}) \to 0.$ Thus $(x_n)$ is a Cauchy sequence and, by completeness of $X$, converges to say $x^* \in X$.

Moreover, for any natural number $m\neq 0$, we have

\begin{align*}
F(D(x_n, T_m(x^*))) = \ & F(D(T_n(x_{n-1}),T_m(x^*))) \\
\leq \ & \delta_{n,m}^s[F(D(x_{n-1},x_n))+ F(D(x^*,T_m(x^*)))] \\
& - \gamma_{n,m}^sF(\psi[D(x_{n-1},x_n),D(x^*,T_m(x^*)]).
\end{align*}

Again, letting $n \to \infty$, we get

\begin{align*}
F(D(x^*, T_m(x^*))) & \leq \  \delta_{n,m}^s[F(D(x^*,x^*))+ F(D(x^*,T_m(x^*)))] - \gamma_{n,m}^sF(\psi[0,D(x^*,T_m(x^*)]) \\
& \leq \ \delta_{n,m} ^sF(D(x^*,T_m(x^*)),
\end{align*}
and since $0\leq \delta_{n,m} <1$, it follows that $F(D(x^*, T_m(x^*)))=0$, i.e. $T_m(x^*)=x^*$.

Then $x^*$ is a common fixed point of $\{T_m\}_{m\geq 1}$. 

To prove the uniqueness of $x^*$, let us suppose that $y^*$ is a common fixed point of $\{T_m\}_{m\geq 1}$, that is $T_m(y^*)=y^*$ for any $m\geq 1$. Then, by \eqref{conditionqq}, we have

\begin{align*}
F(D(x^*,y^*))  & \leq \ F(D(T_m(x^*),T_m(y^*))) \\
& \leq \ \delta_{n,m}^s [F(D(x^*,T_m(x^*)) + F(D(y^*,T_m(y^*))] - \gamma_{n,m}^sF(\psi[D(x^*,T_m(x^*), D(y^*,T_m(y^*))]) \\
& = \delta_{n,m}^s [F(0) + F(0)] - \gamma_{n,m}^sF(\psi[0,0])\\
& = 0.  
\end{align*}

So $x^*$ is the unique common fixed point of $\{T_m\}$.

\end{proof}

	As particular cases of Theorem \ref{main}, we have the following two corollaries.

	\begin{corollary}\label{cor1}
		Let $(X,D,K)$ be a complete metric type space and $\{T_n\}$ be a sequence of self mappings on $X$. Assume that there exist two sequences $(a_n)$ and $(b_n)$ of elements of $X$ such 
		
		\begin{align}\label{condition}
		D(T_i(x),T_j(y)))\leq \  & \delta_{i,j}[D(x,T_i(x))+ D(y,T_j(y))]\\
		&- \gamma_{i,j}\psi[D(x,T_i(x)),D(y,T_j(y))] \nonumber
		\end{align}
		
		for $x,y\in X$ with $x\neq y,$ $0\leq \delta_{i,j},\gamma_{i,j}<1 , \ i,j = 1,2,\cdots ,$ where $\delta_{i,j}=D(a_i,a_j)$, $\gamma_{i,j}=D(b_i,b_j)$, and $\psi:[0,\infty)^2 \to [0,\infty)$ is a continuous mapping such that $%
		\psi(x,y)=0$ if and only if $x=y=0$. 	If the sequence $(s_n)$ where $s_i=\frac{\delta_{i,i+1}^s}{1-\delta_{i,i+1}^s}$ is a non-increasing $\lambda$-sequence of elements of $\mathbb{R}^+=[0,+\infty)$ endowed with the $\max$ metric, 
		then $\{T_n\}$ has a unique common fixed point in $X$. 
		
	\end{corollary}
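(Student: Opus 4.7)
The plan is to derive Corollary \ref{cor1} as an immediate specialization of Theorem \ref{main} by choosing $F$ to be the identity map on $[0,\infty)$. First I would verify that $F(t) = t$ belongs to the class $\Phi$: continuity, monotonicity, and $F^{-1}(0) = \{0\}$ are trivial; sub-additivity holds with equality since $F(a+b) = a+b = F(a)+F(b)$; and $F$ is homogeneous of degree $s = 1$, because $F(\alpha t) = \alpha t = \alpha^{1} F(t)$. So $F \in \Phi$ with $s = 1$.

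Next I would check that the hypotheses of Theorem \ref{main} translate correctly. Applying $F = \mathrm{id}$ to inequality \eqref{conditionqq} gives exactly \eqref{condition}, since each side is left unchanged. Moreover, with $s = 1$ the auxiliary sequence $s_i = \delta_{i,i+1}^s/(1-\delta_{i,i+1}^s)$ in Theorem \ref{main} coincides with the sequence $s_i = \delta_{i,i+1}/(1-\delta_{i,i+1})$ appearing in Corollary \ref{cor1} (the statement of the corollary in fact writes $\delta_{i,i+1}^s$, but this reduces to $\delta_{i,i+1}$ under $s=1$, so the $\lambda$-sequence assumption is the same). The remaining hypotheses---completeness of $(X,D,K)$, the two sequences $(a_n),(b_n)$, the bounds $0 \leq \delta_{i,j},\gamma_{i,j} < 1$, and the properties of $\psi$---are identical in the two statements.

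With these verifications in place, the conclusion of Theorem \ref{main} applies verbatim: the family $\{T_n\}$ admits a unique common fixed point in $X$. The only step that requires any thought is the sub-additivity and homogeneity check for the identity, which is essentially trivial, so no genuine obstacle arises. The proof is therefore a one-line reduction: \emph{apply Theorem \ref{main} with $F = \mathrm{id}_{[0,\infty)}$ and $s = 1$.}
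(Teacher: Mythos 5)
Your proposal is correct and matches the paper's own proof, which simply applies Theorem \ref{main} with $F = I_{[0,\infty)}$; your additional verification that the identity lies in $\Phi$ with homogeneity degree $s=1$ is a welcome but routine elaboration of the same reduction.
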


\begin{proof} 
Apply Theorem \ref{main} by putting $F=I_{[0,\infty)}$, the identity map.	
\end{proof}

	\begin{corollary}\label{cor2}
		Let $(X,D,K)$ be a complete metric type space and $\{T_n\}$ be a sequence of self mappings on $X$. Assume that there exists a sequences $(a_n)$ of elements of $X$ such 
		
		\begin{align}\label{condition2wwwwww}
		F(D(T_i(x),T_j(y))) \leq \  & F(\delta_{i,j}[D(x,T_i(x))+ D(y,T_j(y))])
		\end{align}
		
		for $x,y\in X$ with $x\neq y,$ $0\leq \delta_{i,j}<1 , \ i,j = 1,2,\cdots ,$ and for some $F \in \Phi$ homogeneous with degree $s$, where $\delta_{i,j}=D(a_i,a_j)$. If the sequence $(s_n)$ where $s_i=\frac{\delta_{i,i+1}^s}{1-\delta_{i,i+1}^s}$ is a non-increasing $\lambda$-sequence of elements of $[0,+\infty)$ endowed with the $\max$ metric,
	
		then $\{T_n\}$ has a unique common fixed point in $X$. 
		
	\end{corollary}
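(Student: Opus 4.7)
The plan is to derive Corollary \ref{cor2} as an immediate specialization of Theorem \ref{main}. Comparing the two contractive hypotheses, the only structural difference is that \eqref{condition2wwwwww} drops the subtracted term $F(\gamma_{i,j}\psi[\cdots])$, and correspondingly references only a single sequence $(a_n)$ rather than both $(a_n)$ and $(b_n)$. I would therefore select $(b_n)$ so as to make the $\gamma$-term vanish identically, thereby reducing \eqref{conditionqq} to \eqref{condition2wwwwww}.

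Concretely, pick any fixed $b \in X$ and set $b_n = b$ for every $n \geq 1$. By axiom (D1) we then have $\gamma_{i,j} = D(b_i,b_j) = D(b,b) = 0$ for all $i,j$. Next, I would choose any admissible auxiliary function $\psi$; for instance $\psi(u,v) = u + v$ is continuous and satisfies $\psi(u,v) = 0$ iff $u = v = 0$. Since $F \in \Phi$ is homogeneous of degree $s$ and $F^{-1}(0) = \{0\}$, we have $F(0) = 0$; hence $F(\gamma_{i,j}\psi[\cdots]) = F(0) = 0$, and the contractive condition \eqref{conditionqq} of Theorem \ref{main} collapses precisely to the hypothesis \eqref{condition2wwwwww} of Corollary \ref{cor2}.

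All remaining assumptions — completeness of $(X,D,K)$, homogeneity of $F$ with degree $s$, the bound $0 \leq \delta_{i,j} < 1$, and the non-increasing $\lambda$-sequence property of $(s_n)$ with $s_i = \delta_{i,i+1}^s/(1-\delta_{i,i+1}^s)$ — are shared verbatim between the two statements. Applying Theorem \ref{main} with the data $\bigl((a_n), (b_n), \psi\bigr)$ just constructed then yields the desired unique common fixed point of $\{T_n\}$.

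There is no genuine analytic obstacle here: the entire argument reduces to the observation that a constant choice of $(b_n)$ neutralises the $\gamma$-dependent term, after which Theorem \ref{main} does all the work. The only thing worth double-checking is that the normalisation $F(0)=0$ really is forced by membership in $\Phi$ together with homogeneity — but this is immediate, since $F(0) = F(0 \cdot 1) = 0^{s} F(1) = 0$ for any $s>0$.
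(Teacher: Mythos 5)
Your proposal is correct and is essentially the paper's own proof: the authors likewise reduce to Theorem \ref{main} by taking $\gamma_{i,j}=0$, noting in a footnote that one may choose $(b_n)$ to be any constant sequence. Your added checks (an admissible choice of $\psi$ and the fact that $F(0)=0$) are harmless elaborations of the same one-line argument.
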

	
	\begin{proof} 
	Apply Theorem \ref{main} by putting\footnote{In this case, we can choose $(b_n)$ to be any constant sequence of elements of $X$.} $\gamma_{i,j}=0$.
\end{proof}

	A more general\footnote{Natural in some sense} weak contraction could also be considered as we write our next result.

	\begin{theorem}\label{main2}
		Let $(X,D,K)$ be a complete metric type space and $\{T_n\}$ be a sequence of self mappings on $X$. Assume that there exist two sequences $(a_n)$ and $(b_n)$ of elements of $X$ such 
		
		\begin{align}\label{condition2}
		F(D(T_i(x),T_j(y))) \leq \  & F(\delta_{i,j}[D(x,T_i(x))+ D(y,T_j(y))+D(x,y)])\\
		&- F(\gamma_{i,j}\psi[D(x,T_i(x)),D(y,T_j(y)),D(x,y)]) \nonumber
		\end{align}
		
		for $x,y\in X$ with $x\neq y,$ $0\leq \delta_{i,j},\gamma_{i,j}<1 , \ i,j = 1,2,\cdots ,$ and for some $F \in \Phi$ homogeneous with degree $s$, where $\delta_{i,j}=D(a_i,a_j)$, $\gamma_{i,j}=D(b_i,b_j)$, and $\psi:[0,\infty)^3 \to [0,\infty)$ is a continuous mapping such that $%
		\psi(x,y,z)=0$ if and only if $x=y=z=0$. If the sequence $(s_n)$ where $s_i=\frac{2^s\delta_{i,i+1}^s }{1-\delta_{i,i+1}^s}$ is a non-increasing $\lambda$-sequence of elements of $\mathbb{R}^+=[0,+\infty)$ endowed with the $\max$ metric,
		then $\{T_n\}$ has a unique common fixed point in $X$. 
		
\end{theorem}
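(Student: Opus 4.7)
The plan is to mirror the proof of Theorem \ref{main}, the only genuinely new ingredient being how the additional term $D(x,y)$ inside the right-hand bracket of \eqref{condition2} interacts with the Picard iteration. Starting from an arbitrary $x_0 \in X$, I would build the sequence $x_n = T_n(x_{n-1})$ for $n \geq 1$. Applying \eqref{condition2} with $x = x_{n-1}$, $y = x_n$, $i = n$, $j = n+1$, one notices that $D(x, T_i(x)) = D(x_{n-1}, x_n) = D(x,y)$ and $D(y, T_j(y)) = D(x_n, x_{n+1})$, so the bracket collapses to $2 D(x_{n-1}, x_n) + D(x_n, x_{n+1})$. Using homogeneity of $F$ to pull out $\delta_{n,n+1}^s$, discarding the non-positive $\psi$-term, and invoking sub-additivity together with $F(2t) = 2^s F(t)$, one arrives at
\begin{equation*}
F(D(x_n, x_{n+1})) \leq \delta_{n,n+1}^s \bigl[2^s F(D(x_{n-1}, x_n)) + F(D(x_n, x_{n+1}))\bigr].
\end{equation*}
Rearranging (using $\delta_{n,n+1}^s < 1$) yields the one-step recurrence $F(D(x_n, x_{n+1})) \leq s_n F(D(x_{n-1}, x_n))$, where $s_n$ is exactly the quantity $\frac{2^s \delta_{n,n+1}^s}{1-\delta_{n,n+1}^s}$ from the hypothesis; this explains the extra factor $2^s$ relative to Theorem \ref{main}.

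Iterating gives $F(D(x_n, x_{n+1})) \leq \prod_{i=1}^n s_i \cdot F(D(x_0, x_1))$, after which the Cauchy argument proceeds verbatim as in Theorem \ref{main}: combine the $(n+p)$-fold triangle inequality (which introduces the factor $K^s$ after applying $F$), sub-additivity, the arithmetic--geometric mean inequality on the products $\prod_{i=1}^k s_i$, and the $\lambda$-sequence assumption to derive a bound of the form $F(D(x_n, x_{n+p})) \leq K^s \frac{\lambda^n}{1-\lambda} F(D(x_0, x_1))$ for $n \geq n(\lambda)$. Continuity of $F$ and $F^{-1}(0) = \{0\}$ then force $D(x_n, x_{n+p}) \to 0$, so $(x_n)$ converges to some $x^* \in X$ by completeness.

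For the fixed-point identification, I would apply \eqref{condition2} with $x = x_{n-1}$, $y = x^*$, $i = n$, $j = m$, use sub-additivity on the bracket $D(x_{n-1}, x_n) + D(x^*, T_m(x^*)) + D(x_{n-1}, x^*)$, and let $n \to \infty$; the vanishing of $D(x_{n-1}, x_n)$ and $D(x_{n-1}, x^*)$ leaves $(1 - \delta_{n,m}^s) F(D(x^*, T_m(x^*))) \leq 0$, which forces $T_m(x^*) = x^*$. Uniqueness is then straightforward: for a second common fixed point $y^*$, take $i = j = m$ in \eqref{condition2} and use $\delta_{m,m} = D(a_m, a_m) = 0$ by (D1) to collapse the right-hand side to a non-positive quantity, forcing $x^* = y^*$. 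The main technical obstacle is really just the bookkeeping that produces the $2^s$ factor at the outset; once the correct $s_n$ is identified, the rest is parallel to Theorem \ref{main}.
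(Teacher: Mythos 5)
Your proposal is correct and follows exactly the route the paper intends: the paper omits the proof of Theorem \ref{main2}, stating it is merely a copy of that of Theorem \ref{main}, and your computation correctly identifies that the extra $D(x,y)$ term in the bracket contributes a second copy of $D(x_{n-1},x_n)$ along the Picard orbit, whence sub-additivity and homogeneity of $F$ produce precisely the factor $2^s$ appearing in the hypothesis $s_i=\frac{2^s\delta_{i,i+1}^s}{1-\delta_{i,i+1}^s}$. Your adaptation of the uniqueness step (taking $i=j=m$ so that $\delta_{m,m}=D(a_m,a_m)=0$ annihilates the right-hand side) is the right way to absorb the residual $D(x^*,y^*)$ term that a verbatim copy of Theorem \ref{main}'s uniqueness argument would otherwise leave in the bracket.
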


We shall omit the proof as it is merely a copy of that of Theorem \ref{main}. Then we also give, without proof the following corollary.

\begin{corollary}\label{cor11}
	Let $(X,D,K)$ be a complete metric type space and $\{T_n\}$ be a sequence of self mappings on $X$. Assume that there exists a sequence $(a_n)$ of elements of $X$ such 
	
	\begin{align}\label{condition22}
	F(D(T_i(x),T_j(y)))\leq \  &F( \delta_{i,j}[D(x,T_i(x))+ D(y,T_j(y))+D(x,y)]),\\
	\end{align}
	
	for $x,y\in X$ with $x\neq y,$ $0\leq \delta_{i,j}<1 , \ i,j = 1,2,\cdots ,$ where $\delta_{i,j}=D(a_i,a_j)$, and $\psi:[0,\infty)^3 \to [0,\infty)$ is a continuous mapping such that $%
	\psi(x,y,z)=0$ if and only if $x=y=z=0$. If the sequence $(s_n)$ where $s_i=\frac{2^s\delta_{i,i+1}^s}{1-\delta_{i,i+1}^s}$ is a non-increasing $\lambda$-sequence of elements of $\mathbb{R}^+=[0,+\infty)$ endowed with the $\max$ metric,
	then $\{T_n\}$ has a unique common fixed point in $X$. 
	
\end{corollary}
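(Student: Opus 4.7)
The plan is to obtain this corollary as an immediate specialization of Theorem \ref{main2}, in direct analogy with the way Corollary \ref{cor2} is derived from Theorem \ref{main}. Specifically, I would take $(b_n)$ to be any constant sequence in $X$, say $b_n=b_0$ for all $n$ and some fixed $b_0\in X$. Then $\gamma_{i,j}=D(b_i,b_j)=D(b_0,b_0)=0$ for every $i,j$, and since $F\in\Phi$ satisfies $F(0)=0$ (indeed $F^{-1}(0)=\{0\}$), the penalty term $F(\gamma_{i,j}\psi[D(x,T_i(x)),D(y,T_j(y)),D(x,y)])$ in hypothesis (\ref{condition2}) of Theorem \ref{main2} is identically zero.

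With this choice, hypothesis (\ref{condition2}) of Theorem \ref{main2} collapses to precisely inequality (\ref{condition22}) assumed in the corollary. The map $\psi:[0,\infty)^3\to[0,\infty)$ appearing in the statement plays no active role in the inequality once $\gamma_{i,j}\equiv 0$; its continuity together with the condition $\psi(x,y,z)=0\Leftrightarrow x=y=z=0$ is retained only to align formally with the hypotheses of Theorem \ref{main2}, and any such $\psi$ (for instance the euclidean norm on $[0,\infty)^3$) will do. All remaining data transfer verbatim: completeness of $(X,D,K)$, the homogeneous $F\in\Phi$ of degree $s$, the definition $\delta_{i,j}=D(a_i,a_j)$, and the $\lambda$-sequence property of $(s_n)$ with $s_i=\frac{2^s\delta_{i,i+1}^s}{1-\delta_{i,i+1}^s}$.

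Applying Theorem \ref{main2} then delivers both existence and uniqueness of a common fixed point of $\{T_n\}$ in $X$. I do not foresee any real obstacle here: the reduction is a one-line substitution, mirroring the passage from Theorem \ref{main} to Corollary \ref{cor2}. The only point worth writing out explicitly is the compatibility of the substitution $\gamma_{i,j}\equiv 0$ with membership $F\in\Phi$, which is automatic from $F(0)=0$.
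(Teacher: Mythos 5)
Your reduction is correct and is exactly the route the paper intends: the paper states Corollary \ref{cor11} without proof, but the analogous Corollary \ref{cor2} is proved verbatim by ``Apply Theorem \ref{main} by putting $\gamma_{i,j}=0$'' with $(b_n)$ constant, and your specialization of Theorem \ref{main2} via a constant sequence $(b_n)$, so that $\gamma_{i,j}=D(b_0,b_0)=0$ and the subtracted term vanishes since $F(0)=0$, is the same one-line argument.
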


	\begin{example}
		Let $X=[0,1]$ and $D$ be the metric defined by $D(x,x)=0$ and $D(x,y)= \max\{x,y\}$ if $x\neq y$. Clearly, $(X,D)$ is a complete metric space. Just observe that any Cauchy sequence in $(X,D)$ converges to $0$. Following the notation in Theorem \ref{main}, we set $a_i = \left(\frac{1}{1+2^i}\right)^2$ so that $\delta_{i,j}=\left(\frac{1}{1+2^\eta}\right)^2$ where $\eta= \min\{i,j\}$. We also define $T_i(x)= \frac{x}{16^i}$ for all $x\in X$ and $i=1,2,\cdots$ and $F:[0,\infty) \to [0,\infty), \ x\mapsto \sqrt{x}$. Then $F$ is continuous, non-decreasing, sub-additive and homogeneous with degree $s=\frac{1}{2}$ and $F^{-1}(0)=\{0\}$.

		Assume $i<j$ and $x>y$. Hence we have
		
		\[ F(D(T_i(x),T_j(y)))  = \sqrt{\frac{x}{16^i}} \]
		and
		
		\[ F( \delta_{i,j} [D(x,T_i(x))+ D(y,T_j(y))+D(x,y)]  ) =  \sqrt { \left(\frac{1}{1+2^i}\right)^2 (2x+y)} .\]
		
		Therefore condition \eqref{condition22} is satisfied for all $x,y\in X$ with $x\neq y$. Moreover, since $F$ is homogeneous with degree $s=\frac{1}{2}$, the sequence
		$$ s_i=\frac{ 2^s \delta_{i,i+1}^s }{1-\delta_{i,i+1}^s} = \frac{\sqrt{2}}{2^i} $$
		is a $\lambda$-sequence with $\lambda=\frac{\sqrt{2}}{2}$. Then by Corollary \ref{cor11}, $\{T_n\}$ has a common fixed point, which is this case $x^*=0.$
	\end{example}
	
	\vspace*{0.5cm}

The contractive condition in Theorem \ref{main} can be relaxed and we obtain the next result: 

\begin{theorem}\label{relaxed}
	Let $(X,D,K)$ be a complete metric type space and $\{T_n\}$ be a sequence of self mappings on $X$ such that
	
	\begin{align}\label{conditionrelaxed}
	F(D(T_i(x),T_j(y))) \leq \  & F(\delta_{i,j}[D(x,T_i(x))+ D(y,T_j(y))])\\
	&- F(\gamma_{i,j}\psi[D(x,T_i(x)),D(y,T_j(y))]) \nonumber
	\end{align}

for $x,y\in X$ with $x\neq y,$ $0\leq \delta_{i,j}, \gamma_{i,j} , \ i,j = 1,2,\cdots ,$ for some $F \in \Phi$ homogeneous with degree $s$ and $\psi:[0,\infty)^2 \to [0,\infty)$ is a continuous mapping such that $%
\psi(x,y)=0$ if and only if $x=y=0$.
If 
\begin{itemize}
	\item[i)] for each $j,$ $\underset{i \to \infty}{\limsup}\ \delta_{i,j}^s <1, $
	
	\item[ii)] $$\sum_{n=1}^{\infty} C_n < \infty \text{ where } C_n = \prod\limits_{i=1}^n \frac{\delta_{i,i+1}^s}{1-\delta_{i,i+1}^s}, $$
\end{itemize}

then $\{T_n\}$ has a unique common fixed point in $X$. 

\end{theorem}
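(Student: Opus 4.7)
The plan is to follow the same overall architecture as the proof of Theorem \ref{main}, adapting the two places where that proof used the uniform bound $\delta_{i,j}<1$ together with the $\lambda$-sequence hypothesis. Start with an arbitrary $x_0\in X$ and build the Picard-type iterates $x_n = T_n(x_{n-1})$. Repeating verbatim the chain of inequalities from Theorem \ref{main} (apply \eqref{conditionrelaxed} with $x=x_{n-1}$, $y=x_n$, drop the non-negative $\psi$-term, and rearrange using sub-additivity and homogeneity of $F$), I would obtain
\begin{equation*}
F(D(x_n,x_{n+1})) \leq \prod_{i=1}^{n}\frac{\delta_{i,i+1}^s}{1-\delta_{i,i+1}^s}\, F(D(x_0,x_1)) = C_n\, F(D(x_0,x_1)).
\end{equation*}
Note that for this step to make sense I silently need $\delta_{i,i+1}^s<1$ for all $i$, which is forced by (ii): if some factor equalled or exceeded $1$ the tail sum would diverge.

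Next I would establish that $(x_n)$ is Cauchy. Applying (D3) together with sub-additivity and homogeneity of $F$ gives, for any $n$ and $p\geq 1$,
\begin{equation*}
F(D(x_n,x_{n+p})) \leq K^s\sum_{k=0}^{p-1} C_{n+k}\, F(D(x_0,x_1)) \leq K^s F(D(x_0,x_1))\sum_{k=n}^{\infty} C_k.
\end{equation*}
By hypothesis (ii) the right-hand side tends to $0$ as $n\to\infty$. Since $F$ is continuous and $F^{-1}(0)=\{0\}$, this forces $D(x_n,x_{n+p})\to 0$, so $(x_n)$ is Cauchy; by completeness of $(X,D,K)$ it converges to some $x^*\in X$. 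This is the step where condition (ii) fully replaces the $\lambda$-sequence / geometric mean trick used previously.

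The fixed point step is where hypothesis (i) does the work that the uniform bound $\delta_{n,m}<1$ did before. Fix $m\geq 1$. Applying \eqref{conditionrelaxed} with $x=x_{n-1}$, $y=x^*$, and passing to the limit $n\to\infty$ using continuity of $F$ and $\psi$ together with $D(x_{n-1},x_n)\to 0$, one arrives at
\begin{equation*}
F(D(x^*,T_m(x^*))) \leq \bigl(\limsup_{n\to\infty}\delta_{n,m}^s\bigr) F(D(x^*,T_m(x^*))),
\end{equation*}
after absorbing the $\psi$-term (which is $\geq 0$) into the upper bound. By (i) the multiplicative factor is strictly less than $1$, which forces $F(D(x^*,T_m(x^*)))=0$, hence $T_m(x^*)=x^*$. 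Uniqueness is then identical to the Theorem \ref{main} argument: if $y^*$ is another common fixed point, plugging $x=x^*$, $y=y^*$ into \eqref{conditionrelaxed} (with any choice of $i,j$) makes both sides of type $F(0)$ and yields $F(D(x^*,y^*))=0$.

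The main obstacle I anticipate is the fixed point step, because one has to be vigilant about the order of quantifiers: (i) only asserts a $\limsup$ in $i$ for each fixed $j=m$, so the limiting inequality genuinely needs $n\to\infty$ along the first index, and one must make sure that $F(D(x^*,T_m(x^*)))$ is finite before multiplying and cancelling. Finiteness is immediate from $D$ being real-valued, but articulating the limsup passage cleanly (especially the innocuous-looking step of absorbing the non-positive $-F(\gamma_{n,m}^s\psi[\cdots])$ contribution without assuming $\gamma_{n,m}^s$ stays bounded) is the only genuinely delicate moment in an otherwise routine adaptation.
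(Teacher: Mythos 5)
Your proposal is correct and follows essentially the same route as the paper's own (much terser) proof: the same Picard iteration bound $F(D(x_n,x_{n+1}))\leq C_n F(D(x_0,x_1))$, the same $K^s\sum_k C_k$ estimate with hypothesis (ii) giving the Cauchy property, and hypothesis (i) doing the work in the fixed-point step that the paper dismisses with ``it is easy to see.'' Your elaboration of the $\limsup$ passage and the uniqueness argument simply fills in details the paper omits.
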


	\begin{proof}
	Just observe that the Picard type sequence $(x_n)=(T_n(x_{n-1}))$ for any initial $x_0\in X$ is such that 
	
	\begin{equation}
	F(D(x_n,x_{n+1})) \leq \prod\limits_{i=1}^n \left(\frac{\delta_{i,i+1}^s}{1-\delta_{i,i+1}^s}   \right)  F(D(x_0,x_1)) =: C_n F(D(x_0,x_1)),
	\end{equation}
and so for $p>1$

\begin{align*}
F(D(x_n,x_{n+p})) \leq K^{s}\left[\sum\limits_{k=n}^{n+p-1} C_k\right] F(D(x_0,x_1)).
\end{align*}
Letting $n\to \infty$, we deduce that $D(x_n,x_{n+p}) \to 0.$ Thus $(x_n)$ is a Cauchy sequence and, by completeness of $X$, converges to say $x^* \in X$. It is easy to see that $x^*$ is the unique common fixed point of $\{T_m\}$.	
	
\end{proof}

\begin{corollary}\label{correlax}
	Let $(X,D,K)$ be a complete metric type space and $\{T_n\}$ be a sequence of self mappings on $X$ such that
	
	\begin{align}\label{condcorrelaxed}
	F(D(T_i(x),T_j(y))) \leq \  & F(\delta_{i,j}[D(x,T_i(x))+ D(y,T_j(y))])
	\end{align}
	
	for $x,y\in X$ with $x\neq y,$ $0\leq \delta_{i,j}<1 , \ i,j = 1,2,\cdots ,$ for some $F \in \Phi$ homogeneous with degree $s$ and $\psi:[0,\infty)^2 \to [0,\infty)$ is a continuous mapping such that $%
	\psi(x,y)=0$ if and only if $x=y=0$.
	
	If 
	\begin{itemize}
		\item[i)] for each $j,$ $\underset{i \to \infty}{\limsup}\ \delta_{i,j}^s <1, $
		
		\item[ii)] $$\sum_{n=1}^{\infty} C_n < \infty \text{ where } C_n = \prod\limits_{i=1}^n \frac{\delta_{i,i+1}^s}{1-\delta_{i,i+1}^s}, $$
	\end{itemize}
	
	then $\{T_n\}$ has a unique common fixed point in $X$. 	
\end{corollary}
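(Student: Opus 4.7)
The plan is to deduce Corollary \ref{correlax} as a direct specialisation of Theorem \ref{relaxed}. The contractive inequality \eqref{condcorrelaxed} differs from \eqref{conditionrelaxed} only in the absence of the subtracted term $F(\gamma_{i,j}\psi[D(x,T_i(x)),D(y,T_j(y))])$, so the natural move is to take $\gamma_{i,j}=0$ for all $i,j$. This is always admissible by choosing the auxiliary sequence $(b_n)$ of Theorem \ref{relaxed} to be any constant sequence in $X$, so that $\gamma_{i,j}=D(b_i,b_j)=0$. Since $F^{-1}(0)=\{0\}$ forces $F(0)=0$, the subtracted term then vanishes identically, and \eqref{conditionrelaxed} reduces exactly to \eqref{condcorrelaxed}. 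The function $\psi$ named in the hypothesis of the corollary plays no effective role once $\gamma_{i,j}=0$ and may be taken arbitrarily (for example, the zero map).

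Next, I would verify that the two remaining hypotheses, namely (i) $\limsup_{i\to\infty}\delta_{i,j}^s < 1$ for each $j$ and (ii) $\sum_{n=1}^{\infty} C_n < \infty$ with $C_n=\prod_{i=1}^{n}\delta_{i,i+1}^s/(1-\delta_{i,i+1}^s)$, are word-for-word the same in the two statements, so they transfer without modification. With all the hypotheses of Theorem \ref{relaxed} now in force, the existence and uniqueness of a common fixed point of $\{T_n\}$ follow immediately from that theorem.

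There is no genuine obstacle in this argument: the Picard-type construction $x_n=T_n(x_{n-1})$, the telescoping bound on $F(D(x_n,x_{n+1}))$, the Cauchy property derived from the summability in (ii) together with the factor $K^s$ produced by (D3), and the standard fixed-point and uniqueness arguments using (i) are all inherited verbatim from the proof of Theorem \ref{relaxed}. The only point worth explicitly flagging in the written proof is that $\gamma_{i,j}=0$ is a legitimate reduction, which is why I would briefly record the remark about selecting a constant sequence $(b_n)$ before invoking Theorem \ref{relaxed}.
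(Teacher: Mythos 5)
Your proposal is correct and coincides with the paper's own proof, which is exactly ``Apply Theorem \ref{relaxed} by putting $\gamma_{i,j}=0$'' (note that in Theorem \ref{relaxed} the $\gamma_{i,j}$ are just nonnegative reals, not required to be of the form $D(b_i,b_j)$, so the remark about choosing a constant sequence $(b_n)$ is harmless but not even needed here). Since $F(0)=0$ kills the subtracted term, the reduction is immediate and hypotheses (i) and (ii) transfer verbatim, as you say.
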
		

\begin{proof}
Apply Theorem \ref{relaxed} by putting $\gamma_{i,j}=0$.
\end{proof}

\begin{example}

	Let $X=[0,1]$ and $D(x,y)= |x-y|$ whenever $x,y \in [0,1]$. Clearly, $(X,G)$ is a complete metric space. 

	Following the notation in Theorem \ref{relaxed}, we set $\delta_{i,j} = \left(\frac{1}{1+2^i}\right)^2$.
	We also define $T_i(x)= \frac{x}{4^i}$ for all $x\in X$ and $i=1,2,\cdots$ and $F:[0,\infty) \to [0,\infty), \ x\mapsto \sqrt{x}$. Then $F$ is continuous, non-decreasing, sub-additive and homogeneous of degree $s=\frac{1}{2}$ and $F^{-1}(0)=\{0\}$. Assume $i<j$ and $x>y\geq z$. Hence we have
	\[
	F(D(T_i(x),T_j(y))  = \sqrt{\left|\frac{x}{4^i}- \frac{y}{4^j}     \right|}
	\]
	
	and
	
	\[ F( \delta_{i,j} [D(x,T_i(x))+ D(y,T_j(y))]  ) =  \sqrt { \left(\frac{1}{1+2^i}\right)^2 \left(\left|x-\frac{x}{4^i}\right|+ \left| y-\frac{y}{4^j} \right| \right)} .\]
	
	Therefore condition \eqref{condcorrelaxed} is satisfied for all $x,y\in X$ with $x\neq y$. Moreover, since $F$ is homogeneous of degree $s=\frac{1}{2}$, we have 
	\begin{itemize}
		\item[i)] $\underset{i \to \infty}{\limsup}\ \delta^s_{i,j} <1, $

		\item[ii)] $$ C_n= \prod\limits_{i=1}^n \frac{\delta_{i,i+1}^s}{1-\delta_{i,i+1}^s} =  \prod\limits_{i=1}^n \frac{1}{2^i}=  \frac{1}{2^{\frac{n(n+1)}{2}}} \leq \frac{1}{2 ^n}.$$
		
	\end{itemize}
	
	The conditions of Corollary \ref{correlax} are satisfied, $\{T_n\}$ has a common fixed point, which is this case $x^*=0.$

\end{example}

An interesting direction to look into is that of the quasi-pseudometric type spaces which were investigated by 	Kazeem et al. \cite{kaz}. In \cite{kaz}, we can read the following definition for a quasi-pseudometric type space:

	\begin{definition}(Compare \cite[Definition 29]{kaz})
		Let $X$ be a nonempty set, and let the function $D:X\times X \to [0,\infty)$ satisfy the following properties:
		\begin{itemize}
			\item[(q1)] $D(x,x)=0$ for any $x \in X$;
			\item[(q2)] $D(x,y) \leq K \big( D(x,z_1)+D(z_1,z_2)+\cdots+D(z_n,y) \big)$ for any points $x,y,z_i\in X,\ i=1,2,\ldots, n$ where $n\geq 1$ is a fixed natural number and some constant $K\geq 0$.
		\end{itemize}
		The triplet $(X,D,K)$ is called a \textbf{quasi-pseudometric type space}. Moreover, if $$D(x,y)=0=D(y,x) \Longrightarrow x=y,$$ then $D$ is said to be a $T_0$-quasi-metric type. 
	\end{definition}
	
\begin{remark}
For a given $T_0$-quasi-metric type $D$, it is easy to verify that the function $D^s(x,y) = \max\{D(x,y),D(y,x)\}$ is metric type. Moreover, a $T_0$-quasi-metric type space $(X,D)$ will be called \textbf{bicomplete} if the metric type space $(X,D^s)$ is complete.
\end{remark}

Theorem \ref{main} and Theorem \ref{main2} can be reformulated in the asymmetric setting respectively as:

\begin{theorem}(Compare Theorem \ref{main})
	Let $(X,D,K)$ be a bicomplete quasi-metric type space and $\{T_n\}$ be a sequence of self mappings on $X$. Assume that there exist two sequences $(a_n)$ and $(b_n)$ of elements of $X$ such 
	
	\begin{align}\label{conditionasym}
	F(D(T_i(x),T_j(y))) \leq \  & F(\delta_{i,j}[D(x,T_i(x))+ D(T_j(y),y)])\\
	&- F(\gamma_{i,j}\psi[D^s(x,T_i(x)),D^s(y,T_j(y))]) \nonumber
	\end{align}
	
	for $x,y\in X$ with $x\neq y,$ $0\leq \delta_{i,j},\gamma_{i,j}<1 , \ i,j = 1,2,\cdots ,$ and for some $F \in \Phi$ homogeneous with degree $t$, where $\delta_{i,j}=D^s(a_i,a_j)$, $\gamma_{i,j}=D^s(b_i,b_j)$, and $\psi:[0,\infty)^2 \to [0,\infty)$ is a symmetric continuous mapping such that $%
	\psi(x,y)=0$ if and only if $x=y=0$. If the sequence $(s_n)$ where $s_i=\frac{\delta_{i,i+1}^t }{1-\delta_{i,i+1}^t}$ is a non-increasing $\lambda$-sequence of elements of $\mathbb{R}^+=[0,+\infty)$ endowed with the $\max$ metric,
	then $\{T_n\}$ has a unique common fixed point in $X$. 
	
\end{theorem}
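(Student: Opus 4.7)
The plan is to follow the template of Theorem~\ref{main}, with the asymmetry of $D$ forcing me to track the forward and backward distances $D(x_n,x_{n+1})$ and $D(x_{n+1},x_n)$ simultaneously along the Picard orbit. The fact that $\delta_{i,j}$ and $\gamma_{i,j}$ are built out of the symmetric gauge $D^s$ and that $\psi$ is assumed symmetric is precisely what will let the two directional applications of the contractive inequality couple into a one-parameter recursion controlled by the $\lambda$-sequence $(s_n)$.

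Fixing $x_0\in X$, I would form the Picard iterates $x_n:=T_n(x_{n-1})$ and apply \eqref{conditionasym} in both directions---once with $(i,j,x,y)=(n,n+1,x_{n-1},x_n)$ and once with $(i,j,x,y)=(n+1,n,x_n,x_{n-1})$. Discarding the non-negative subtracted $F(\gamma_{n,n+1}\psi[\cdots])$ terms and using homogeneity of degree $t$ together with sub-additivity of $F$, I obtain the coupled estimates
\begin{align*}
F(D(x_n,x_{n+1})) &\leq \delta_{n,n+1}^{t}\bigl[F(D(x_{n-1},x_n)) + F(D(x_{n+1},x_n))\bigr], \\
F(D(x_{n+1},x_n)) &\leq \delta_{n,n+1}^{t}\bigl[F(D(x_n,x_{n+1})) + F(D(x_n,x_{n-1}))\bigr].
\end{align*}
Setting $E_n:=F(D(x_n,x_{n+1}))+F(D(x_{n+1},x_n))$ and summing these two inequalities rearranges to $E_n \leq s_n E_{n-1}$, hence $E_n \leq \bigl(\prod_{i=1}^{n} s_i\bigr) E_0$. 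Since $F$ is non-decreasing and sub-additive, $F(D^s(x_n,x_{n+1})) \leq E_n$, and $D^s$ is a metric type with the same constant $K$ as $D$, so the arithmetic--geometric mean/$\lambda$-sequence step from the proof of Theorem~\ref{main} transfers verbatim to yield $F(D^s(x_n,x_{n+p}))\to 0$. Continuity of $F$ together with $F^{-1}(0)=\{0\}$ then gives that $(x_n)$ is $D^s$-Cauchy, and bicompleteness furnishes a limit $x^*\in X$.

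To identify $x^*$ as a common fixed point I would, for each $m$, apply \eqref{conditionasym} twice more: to $D(x_n,T_m(x^*))$ with $(i,j,x,y)=(n,m,x_{n-1},x^*)$ and to $D(T_m(x^*),x_n)$ with $(i,j,x,y)=(m,n,x^*,x_{n-1})$. Passing to the limit $n\to\infty$ (using continuity of $D$ under $D^s$-convergence, continuity of $F$ and $\psi$, and $D(x_{n-1},x_n)\to 0$), the two bounds read
\[
F(D(x^*,T_m(x^*))) \leq \delta_{n,m}^{t}\,F(D(T_m(x^*),x^*)), \quad F(D(T_m(x^*),x^*)) \leq \delta_{n,m}^{t}\,F(D(x^*,T_m(x^*))).
\]
Composing these yields $F(D(x^*,T_m(x^*))) \leq \delta_{n,m}^{2t}\,F(D(x^*,T_m(x^*)))$, and since $\delta_{n,m}<1$ this forces $D^s(x^*,T_m(x^*))=0$, so the $T_0$ axiom gives $T_m(x^*)=x^*$. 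Uniqueness is a verbatim transcription of the last step of the proof of Theorem~\ref{main}, with $D$ replaced by $D^s$ where needed. \emph{The main obstacle} is the coupling step for $E_n$: with no symmetry on $D$ there is no direct one-variable recursion, and the whole argument only closes because the symmetry of $D^s$ and of $\psi$ makes the two directional inequalities combine linearly to produce the very same contraction factor $s_n$ that appears in the hypothesis.
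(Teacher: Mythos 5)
Your argument is correct, but it takes a genuinely different and considerably more laborious route than the paper. The paper's proof is a two-line reduction: it applies the hypothesis \eqref{conditionasym} once more with the roles of $(i,x)$ and $(j,y)$ interchanged to bound $F(D(T_j(y),T_i(x)))$ by the same right-hand side, uses that $\delta_{i,j}$, $\gamma_{i,j}$ and the arguments of $\psi$ are already expressed through the symmetric gauge $D^s$ (and that $\psi$ is symmetric, $F$ non-decreasing), and concludes that the \emph{symmetrized} contractive inequality
\[
F(D^{s}(T_i(x),T_j(y))) \leq F\bigl(\delta_{i,j}[D^s(x,T_i(x))+ D^s(y,T_j(y))]\bigr)- F\bigl(\gamma_{i,j}\psi[D^s(x,T_i(x)),D^s(y,T_j(y))]\bigr)
\]
holds on the complete metric type space $(X,D^s,K)$, so Theorem~\ref{main} applies verbatim and nothing needs to be re-proved. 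You instead re-run the whole Picard machinery inside the asymmetric structure, coupling the forward and backward increments into the quantity $E_n=F(D(x_n,x_{n+1}))+F(D(x_{n+1},x_n))$ and recovering the recursion $E_n\leq s_nE_{n-1}$ with the same factor $s_n$ as in the hypothesis; your fixed-point identification likewise composes the two directional bounds to force $D^s(x^*,T_m(x^*))=0$. This works (your coupled inequalities and the cancellation $\delta_{n+1,n}=\delta_{n,n+1}$ are right, and the step $F(D^s(x_n,x_{n+1}))\leq E_n$ is legitimate since $F$ is non-decreasing), and it has the minor virtue of showing explicitly where the symmetry of $\psi$ and of the coefficients enters; but it duplicates the entire proof of Theorem~\ref{main} where a direct appeal to that theorem on $(X,D^s,K)$ suffices. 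The one blemish you share with the paper is leaving the index $n$ inside $\delta_{n,m}$ after passing to the limit $n\to\infty$; a cleaner statement would take a $\limsup$ there, but this does not affect the conclusion since each $\delta_{n,m}^{t}<1$.
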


\begin{proof}
	Just observe that for $x,y\in X$ with $x\neq y,$  condition \eqref{conditionasym} gives
	
	\begin{align*}
	F(D^{-1}(T_i(x),T_j(y))) & =  F(D(T_j(y)),T_i(x)) \\
	  & \leq F(\delta_{j,i}[D(y,T_j(y))+D(T_i(x),x) ])\\
	&- F(\gamma_{j,i}\psi[D^s(y,T_j(y)),D^s(x,T_i(x))]) \nonumber \\
	  & = F(\delta_{i,j}[D^{-1}(x,T_i(x))+ D^{-1}(T_j(y),y) ])\\
	&- F(\gamma_{i,j}\psi[D^s(x,T_i(x)),D^s(y,T_j(y))]) \nonumber, \\
	\end{align*}

which implies that

\[	F(D^{s}(T_i(x),T_j(y))) \leq \  F(\delta_{i,j}[D^s(T_i(x),x)+ D(y,T_j(y)) ])- F(\gamma_{i,j}\psi[D^s(x,T_i(x)),D^s(y,T_j(y))]),   \]	
	i.e. the conditions of Theorem \ref{main} are fulfilled. This completes the proof.
\end{proof}

Similarly, we have

\begin{theorem}(Compare Theorem \ref{main2})
		Let $(X,D,K)$ be a bicomplete quasi-metric type space and $\{T_n\}$ be a sequence of self mappings on $X$. Assume that there exist two sequences $(a_n)$ and $(b_n)$ of elements of $X$ such 
	
	\begin{align}
	F(D(T_i(x),T_j(y))) \leq \  & F(\delta_{i,j}[D(x,T_i(x))+ D(T_j(y),y)+D(x,y)])\\
	&- F(\gamma_{i,j}\psi[D^s(x,T_i(x)),D^s(y,T_j(y)),D^s(x,y)]) \nonumber
	\end{align}
	
	for $x,y\in X$ with $x\neq y,$ $0\leq \delta_{i,j},\gamma_{i,j}<1 , \ i,j = 1,2,\cdots ,$ and for some $F \in \Phi$ homogeneous with degree $t$, where $\delta_{i,j}=D^s(a_i,a_j)$, $\gamma_{i,j}=D^s(b_i,b_j)$, and $\psi:[0,\infty)^3 \to [0,\infty)$ is a symmetric continuous mapping such that $%
	\psi(x,y,z)=0$ if and only if $x=y=z=0$. If the sequence $(s_n)$ where $s_i=\frac{2^t\delta_{i,i+1}^t }{1-\delta_{i,i+1}^t}$ is a non-increasing $\lambda$-sequence of elements of $\mathbb{R}^+=[0,+\infty)$ endowed with the $\max$ metric,
	then $\{T_n\}$ has a unique common fixed point in $X$. 
	
\end{theorem}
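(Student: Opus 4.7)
The plan is to mirror the argument used to prove the asymmetric version of Theorem \ref{main} given just above: reduce to the symmetric setting via the metric type $D^s$ and invoke Theorem \ref{main2} directly.

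First I would apply the hypothesis to the pair $(y,x)$ with indices $(j,i)$. Because $\delta_{i,j}=D^s(a_i,a_j)$ and $\gamma_{i,j}=D^s(b_i,b_j)$ are symmetric in $i,j$ and $\psi$ is symmetric in its arguments, the right-hand side recovers the same shape up to reordering. Using $D(T_j(y),T_i(x))=D^{-1}(T_i(x),T_j(y))$ and $D(y,x)=D^{-1}(x,y)$, this furnishes
\begin{align*}
F(D^{-1}(T_i(x),T_j(y))) \leq \ & F(\delta_{i,j}[D^{-1}(x,T_i(x)) + D^{-1}(T_j(y),y) + D^{-1}(x,y)]) \\
& - F(\gamma_{i,j}\psi[D^s(x,T_i(x)),D^s(y,T_j(y)),D^s(x,y)]).
\end{align*}

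Next, I would combine this bound with the original hypothesis. Since $F$ is non-decreasing and each of $D, D^{-1}$ is dominated pointwise by $D^s$, every $D$- or $D^{-1}$-term on the right may be replaced by the corresponding $D^s$-term, yielding a common upper bound
$$F(\delta_{i,j}[D^s(x,T_i(x))+D^s(y,T_j(y))+D^s(x,y)]) - F(\gamma_{i,j}\psi[D^s(x,T_i(x)),D^s(y,T_j(y)),D^s(x,y)])$$
for both $F(D(T_i(x),T_j(y)))$ and $F(D^{-1}(T_i(x),T_j(y)))$. As $D^s=\max\{D,D^{-1}\}$ and $F$ is non-decreasing, the same bound holds for $F(D^s(T_i(x),T_j(y)))$.

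This is precisely the contractive condition of Theorem \ref{main2} on the metric type space $(X,D^s)$, which is complete thanks to the bicompleteness hypothesis. Since the $\lambda$-sequence condition depends only on $(\delta_{i,i+1})$ and the homogeneity degree $t$, it carries over verbatim, and a direct application of Theorem \ref{main2} produces the unique common fixed point. The only delicate point is the symmetrization step, which hinges entirely on the assumed symmetry of $\delta_{i,j}, \gamma_{i,j}$ and $\psi$ together with the monotonicity of $F$; once this is granted, no genuinely new analytic ingredient is required, and this is why the paper can legitimately compress the proof to an appeal to Theorem \ref{main2}.
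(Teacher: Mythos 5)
Your proposal is correct and follows essentially the same route the paper intends: symmetrize the contractive condition by applying it to $(y,x)$ with indices $(j,i)$, use the symmetry of $\delta_{i,j}$, $\gamma_{i,j}$ and $\psi$ together with the monotonicity of $F$ to pass to $D^s$, and then invoke Theorem \ref{main2} on the complete metric type space $(X,D^s,K)$. This is exactly the argument the paper gives for the asymmetric analogue of Theorem \ref{main} and then declines to repeat here; if anything, your handling of the passage from $D$ and $D^{-1}$ to $D^s$ on the right-hand side is a little more careful than the paper's own display.
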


\vspace*{0.5cm}

\subsection{Common fixed point theorems ( Chatterjea-Choudhury case)} \hspace*{2cm}	

\vspace*{0.2cm}

In this section, we prove existence of a unique common
fixed point for a family of contractive type self-maps on a
complete metric type space by using the Chatterjea contractive condition as a base.

\begin{theorem}\label{chat1}

Let $(X,D,K)$ be a complete metric type space and $\{T_n\}$ be a sequence of self mappings on $X$. Assume that there exist two sequences $(a_n)$ and $(b_n)$ of elements of $X$ such that

\begin{align}\label{condchat1}
F(D(T_i(x),T_j(y))) \leq \  & F(\delta_{i,j}[D(x,T_j(y))+ D(y,T_i(x))])\\
&- F(\gamma_{i,j}\psi[D(x,T_j(y)),D(y,T_i(x))]) \nonumber
\end{align}

for $x,y\in X$ with $x\neq y,$ $0\leq \delta_{i,j},\gamma_{i,j}<1 , \ i,j = 1,2,\cdots ,$ and for some $F \in \Phi$ homogeneous with degree $s$, where $\delta_{i,j}=D(a_i,a_j)$, $\gamma_{i,j}=D(b_i,b_j)$, and $\psi:[0,\infty)^2 \to [0,\infty)$ is a continuous mapping such that $%
\psi(x,y)=0$ if and only if $x=y=0$. If the sequence $(s_n)$ where $s_i=\frac{\delta_{i,i+1}^s }{1-\delta_{i,i+1}^s}$ is a non-increasing $\lambda$-sequence of elements of $\mathbb{R}^+=[0,+\infty)$ endowed with the $\max$ metric, then $\{T_n\}$ has a unique common fixed point in $X$. 

\end{theorem}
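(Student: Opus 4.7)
The plan is to mirror the proof of Theorem \ref{main} almost line by line, with one genuine twist: the Chatterjea-style right-hand side forces an extra application of the $K$-triangle inequality in the very first step, so the coefficients have to be rearranged before iterating. I would start from an arbitrary $x_0\in X$, set $x_n = T_n(x_{n-1})$, and derive a one-step estimate of the form $F(D(x_n,x_{n+1}))\leq C_n\,F(D(x_{n-1},x_n))$.

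To produce that estimate, apply \eqref{condchat1} with $i=n$, $j=n+1$, $x=x_{n-1}$, $y=x_n$. The identity $T_n(x_{n-1})=x_n$ collapses $D(y,T_i(x))=D(x_n,x_n)=0$, leaving $D(x,T_j(y))=D(x_{n-1},x_{n+1})$. Dropping the nonnegative $\psi$-term and using homogeneity of $F$ yields
\[
F(D(x_n,x_{n+1})) \leq \delta_{n,n+1}^s F(D(x_{n-1},x_{n+1})).
\]
I would then split $D(x_{n-1},x_{n+1})$ through the $K$-triangle inequality and invoke sub-additivity plus homogeneity of $F$, obtaining after rearrangement
\[
F(D(x_n,x_{n+1})) \leq \frac{K^s\delta_{n,n+1}^s}{1-K^s\delta_{n,n+1}^s}\, F(D(x_{n-1},x_n)).
\]
From here the product bound, the $K$-triangle expansion of $F(D(x_n,x_{n+p}))$, and the AM--GM inequality applied to the geometric means of the $\delta_{i,i+1}^s/(1-\delta_{i,i+1}^s)$ all go through verbatim as in Theorem \ref{main}, so the $\lambda$-sequence hypothesis yields that $(x_n)$ is Cauchy.

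By completeness $x_n\to x^*\in X$. To check $T_m(x^*)=x^*$ for arbitrary $m$, I would instantiate \eqref{condchat1} with $i=n$, $j=m$, $x=x_{n-1}$, $y=x^*$, so that the right-hand side involves $D(x_{n-1},T_m(x^*))$ and $D(x^*,x_n)$; letting $n\to\infty$ and using continuity of $F$ and $\psi$ forces $F(D(x^*,T_m(x^*)))\leq \delta_{n,m}^s F(D(x^*,T_m(x^*)))$, and since $\delta_{n,m}<1$ this gives $F(D(x^*,T_m(x^*)))=0$. Uniqueness is immediate on taking $i=j=m$ in \eqref{condchat1} with two putative common fixed points, because property (D1) gives $\delta_{m,m}=D(a_m,a_m)=0$ and $\gamma_{m,m}=D(b_m,b_m)=0$.

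The main obstacle is the $K^s$ factor that appears in the Chatterjea step but not in the Kannan step: a strict copy of the proof of Theorem \ref{main} produces the contraction coefficient $K^s\delta_{n,n+1}^s/(1-K^s\delta_{n,n+1}^s)$ rather than the $s_i$ stated in the theorem. For $K=1$ (the metric case) the two coincide; for $K>1$ one must either absorb the $K^s$ into the hypothesis on $(s_n)$ or strengthen $\delta_{i,i+1}^s<1$ to $K^s\delta_{i,i+1}^s<1$. Modulo this bookkeeping, every subsequent estimate is a direct transcription of the corresponding step in the proof of Theorem \ref{main}.
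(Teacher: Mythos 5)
Your proposal is the natural fleshing-out of what the paper leaves implicit: its entire proof of Theorem \ref{chat1} is the sentence ``the proof follows exactly the same steps as the proof of Theorem \ref{main}.'' Your computation shows that this claim is not literally true, and the discrepancy you isolate is genuine. In the Kannan case the one-step estimate comes from $D(x,T_i(x))+D(y,T_j(y))=D(x_{n-1},x_n)+D(x_n,x_{n+1})$ with no triangle inequality needed; in the Chatterjea case the surviving term is $D(x_{n-1},x_{n+1})$, which must be split by (D3), and the constant $K^s$ then enters the recursion. The coefficient you obtain, $K^s\delta_{n,n+1}^s/(1-K^s\delta_{n,n+1}^s)$, is what the argument actually delivers, and it agrees with the stated $s_i=\delta_{i,i+1}^s/(1-\delta_{i,i+1}^s)$ only when $K=1$. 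So as written the theorem's hypothesis on $(s_n)$ does not suffice for $K>1$ unless one strengthens it exactly as you suggest (require $K^s\delta_{i,i+1}^s<1$ and impose the $\lambda$-sequence condition on the $K$-corrected quotients). This is a real gap in the paper's proof-by-reference, not in your argument. Your uniqueness step via $\delta_{m,m}=D(a_m,a_m)=0$ is also the right move here: the paper's uniqueness computation for Theorem \ref{main} exploits $D(x^*,T_m(x^*))=0$, which in the Chatterjea right-hand side becomes $D(x^*,T_m(y^*))=D(x^*,y^*)$ and does not vanish, so the $\delta_{m,m}=0$ route is actually needed rather than optional.

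Two smaller caveats, both of which the paper itself also glosses over: in the step $F(D(x_n,T_m(x^*)))\to F(D(x^*,T_m(x^*)))$ you are implicitly using continuity of $D$, which is not guaranteed in a metric type space and should either be assumed or replaced by a (D3)-based sandwich; and the quantity $\delta_{n,m}^s$ surviving after ``letting $n\to\infty$'' should be replaced by $\limsup_n\delta_{n,m}^s$ or handled by fixing a subsequence, since it depends on the index being sent to infinity. Neither affects the structure of your argument.
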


\begin{proof}
	The proof follows exactly the same steps as the proof of Theorem \ref{main}
\end{proof}

\begin{theorem}\label{chat2}
	Let $(X,D,K)$ be a complete metric type space and $\{T_n\}$ be a sequence of self mappings on $X$. Assume that there exist two sequences $(a_n)$ and $(b_n)$ of elements of $X$ such 
	
	\begin{align}\label{condchat2}
	F(D(T_i(x),T_j(y))) \leq \  & F(\delta_{i,j}[D(x,T_j(x))+ D(y,T_i(y))+D(x,y)])\\
	&- F(\gamma_{i,j}\psi[D(x,T_j(y)),D(y,T_i(x)),D(x,y)]) \nonumber
	\end{align}
	
	for $x,y\in X$ with $x\neq y,$ $0\leq \delta_{i,j},\gamma_{i,j}<1 , \ i,j = 1,2,\cdots ,$ and for some $F \in \Phi$ homogeneous with degree $s$, where $\delta_{i,j}=D(a_i,a_j)$, $\gamma_{i,j}=D(b_i,b_j)$, and $\psi:[0,\infty)^3 \to [0,\infty)$ is a continuous mapping such that $%
	\psi(x,y,z)=0$ if and only if $x=y=z=0$. If the sequence $(s_n)$ where $s_i=\frac{2^s\delta_{i,i+1}^s }{1-\delta_{i,i+1}^s}$ is a non-increasing $\lambda$-sequence of elements of $\mathbb{R}^+=[0,+\infty)$ endowed with the $\max$ metric,
	then $\{T_n\}$ has a unique common fixed point in $X$. 
	
\end{theorem}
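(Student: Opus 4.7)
The plan is to follow the proof of Theorem~\ref{main} essentially verbatim, incorporating the two modifications already handled by Theorem~\ref{main2} (the extra $D(x,y)$ summand, which generates the $2^s$ factor via homogeneity) and by Theorem~\ref{chat1} (the Chatterjea cross terms). I would fix $x_0\in X$ arbitrarily and build the Picard-type sequence $x_n=T_n(x_{n-1})$. Instantiating \eqref{condchat2} with $x=x_{n-1}$, $y=x_n$, $i=n$, $j=n+1$, the Chatterjea structure collapses one cross term, $D(y,T_i(x))=D(x_n,x_n)=0$, while $D(x,T_j(y))$ reduces to $D(x_{n-1},x_{n+1})$; the latter is then controlled via the metric-type triangle inequality $D(x_{n-1},x_{n+1})\le K[D(x_{n-1},x_n)+D(x_n,x_{n+1})]$.

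Next, I would apply the sub-additivity and degree-$s$ homogeneity of $F$, drop the non-positive $-F(\gamma_{i,j}\psi[\cdots])$ contribution, and isolate $F(D(x_n,x_{n+1}))$ on the left. Rearrangement yields the recursive estimate
\begin{equation*}
F(D(x_n,x_{n+1}))\le \frac{2^s\delta_{n,n+1}^s}{1-\delta_{n,n+1}^s}\,F(D(x_{n-1},x_n))=s_n\,F(D(x_{n-1},x_n)),
\end{equation*}
which, iterated, produces $F(D(x_n,x_{n+1}))\le\bigl(\prod_{i=1}^n s_i\bigr)F(D(x_0,x_1))$. The Cauchy step is then transcribed from Theorem~\ref{main}: expand $F(D(x_n,x_{n+p}))$ through the metric-type triangle inequality and sub-additivity into $K^s\sum_{k=n}^{n+p-1}\prod_{i=1}^k s_i\cdot F(D(x_0,x_1))$, and use the arithmetic--geometric mean inequality together with the $\lambda$-sequence hypothesis on $(s_n)$ to dominate this by $K^s\lambda^n/(1-\lambda)\cdot F(D(x_0,x_1))$. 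Continuity of $F$ with $F^{-1}(0)=\{0\}$ then yields $(x_n)$ Cauchy, and completeness produces a limit $x^*\in X$.

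To verify that $x^*$ is the sought common fixed point, I would plug $x=x_n$, $y=x^*$, $i=n$, $j=m$ into \eqref{condchat2}, pass to the limit as $n\to\infty$, and use continuity of $F$ and $\psi$ together with $0\le\delta_{n,m}<1$ to obtain $F(D(x^*,T_m(x^*)))=0$; hence $T_m(x^*)=x^*$ for every $m\ge 1$. Uniqueness follows by applying \eqref{condchat2} to two putative common fixed points, whose right-hand side vanishes because all the relevant $D(\cdot,T_m(\cdot))$ terms are zero. The delicate point is the rearrangement producing precisely the ratio $s_n$: unlike in Theorem~\ref{main}, the Chatterjea cross term forces an application of the triangle inequality, and one must check that the sub-additive splitting yields exactly the announced denominator $1-\delta_{n,n+1}^s$ and numerator $2^s\delta_{n,n+1}^s$, rather than constants distorted by $K$. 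This is the main bookkeeping obstacle and the step that most clearly distinguishes the proof from that of Theorem~\ref{main}.
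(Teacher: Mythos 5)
Your overall strategy coincides with the paper's, which disposes of this theorem in one line by declaring it a transcription of the proof of Theorem \ref{main2} (itself a copy of Theorem \ref{main}); that transcription is exactly what you propose. But there is a genuine gap at the step you yourself single out as ``the main bookkeeping obstacle,'' and it does not go away. First, a reading issue: as printed, the dominant bracket in \eqref{condchat2} is $D(x,T_j(x))+D(y,T_i(y))+D(x,y)$, not the Chatterjea cross terms $D(x,T_j(y))+D(y,T_i(x))+D(x,y)$ that you substitute --- the cross terms occur only inside $\psi$, the very term you discard. With the condition as literally written, taking $x=x_{n-1}$, $y=x_n$, $i=n$, $j=n+1$ produces $D(x_{n-1},T_{n+1}(x_{n-1}))$, which is not an increment of the Picard sequence, so no recursion is available at all; your charitable rereading as cross terms is therefore forced, but it must be made explicit.

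Second, and more seriously: under the cross-term reading one obtains
\[
F(D(x_n,x_{n+1}))\le F\bigl(\delta_{n,n+1}[D(x_{n-1},x_{n+1})+0+D(x_{n-1},x_n)]\bigr),
\]
and after inserting $D(x_{n-1},x_{n+1})\le K[D(x_{n-1},x_n)+D(x_n,x_{n+1})]$ and using monotonicity, homogeneity and sub-additivity of $F$, the rearrangement yields
\[
F(D(x_n,x_{n+1}))\le \frac{(K+1)^s\,\delta_{n,n+1}^s}{1-K^s\delta_{n,n+1}^s}\,F(D(x_{n-1},x_n)),
\]
which is legitimate only when $K^s\delta_{n,n+1}^s<1$ --- not among the hypotheses --- and which is \emph{not} the announced ratio $s_n=2^s\delta_{n,n+1}^s/(1-\delta_{n,n+1}^s)$ unless $K=1$. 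The factor $2^s$ is native to Theorem \ref{main2}, where the bracket contributes $2D(x_{n-1},x_n)+D(x_n,x_{n+1})$ with no triangle inequality needed; here the metric-type constant $K$ unavoidably enters both numerator and denominator. Consequently the recursion you assert is not established, and the $\lambda$-sequence hypothesis is imposed on the wrong sequence: one would need to assume $K^s\delta_{i,i+1}^s<1$ and require $(K+1)^s\delta_{i,i+1}^s/(1-K^s\delta_{i,i+1}^s)$ to form the $\lambda$-sequence, or else restrict to $K=1$. The paper's own one-line proof suffers from the identical defect, so you have in fact located a real problem rather than merely failed to solve one; but as a proof of the theorem as stated, your argument is incomplete at precisely this point.
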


\begin{proof}
	The proof follows exactly the same steps as the proof of Theorem \ref{main2}
\end{proof}

Like in the case of Theorem \ref{main}, the condition \eqref{chat1} can also be relaxed and we obtain:

\begin{theorem}\label{chat1relaxed}
	Let $(X,D,K)$ be a complete metric type space and $\{T_n\}$ be a sequence of self mappings on $X$ such that
	
	\begin{align}\label{condchat1relaxed}
	F(D(T_i(x),T_j(y))) \leq \  & F(\delta_{i,j}[D(x,T_j(y))+ D(y,T_i(x))])\\
	&- F(\gamma_{i,j}\psi[D(x,T_j(y)),D(y,T_i(x))]) \nonumber
	\end{align}
	
	for $x,y\in X$ with $x\neq y,$ $0\leq \delta_{i,j}, \gamma_{i,j}, \ i,j = 1,2,\cdots ,$ for some $F \in \Phi$ homogeneous with degree $s$ and $\psi:[0,\infty)^2 \to [0,\infty)$ is a continuous mapping such that $%
	\psi(x,y)=0$ if and only if $x=y=0$.
	If 
	\begin{itemize}
		\item[i)] for each $j,$ $\underset{i \to \infty}{\limsup}\ \delta_{i,j}^s <1, $
		
		\item[ii)] $$\sum_{n=1}^{\infty} C_n < \infty \text{ where } C_n = \prod\limits_{i=1}^n \frac{\delta_{i,i+1}^s}{1-\delta_{i,i+1}^s}, $$
	\end{itemize}
	
	then $\{T_n\}$ has a unique common fixed point in $X$. 
	
\end{theorem}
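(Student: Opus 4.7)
My plan is to mirror the proof of Theorem \ref{relaxed}, adapting each step to the Chatterjea cross terms in \eqref{condchat1relaxed}. First I would fix $x_0 \in X$ and form the Picard-type iterates $x_n = T_n(x_{n-1})$. The crucial structural simplification is that specialising \eqref{condchat1relaxed} to $i = n$, $j = n+1$, $x = x_{n-1}$, $y = x_n$ collapses the cross term $D(y, T_i(x)) = D(x_n, x_n) = 0$, leaving, after dropping the non-negative $\psi$-contribution and using homogeneity of $F$, the one-line estimate $F(D(x_n, x_{n+1})) \leq \delta_{n,n+1}^s F(D(x_{n-1}, x_{n+1}))$. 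A single application of the metric-type triangle inequality together with sub-additivity of $F$ converts this into the Kannan-style per-step contraction $F(D(x_n, x_{n+1})) \leq \tfrac{\delta_{n,n+1}^s}{1-\delta_{n,n+1}^s} F(D(x_{n-1}, x_n))$, which I would iterate to $F(D(x_n, x_{n+1})) \leq C_n F(D(x_0, x_1))$.

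From this point the Cauchy argument is essentially verbatim Theorem \ref{relaxed}: repeated use of the triangle inequality and sub-additivity gives $F(D(x_n, x_{n+p})) \leq K^s \sum_{k=n}^{n+p-1} C_k F(D(x_0, x_1))$; the summability hypothesis (ii) forces this to vanish as $n \to \infty$, so continuity of $F$ with $F^{-1}(0) = \{0\}$ makes $(x_n)$ Cauchy, and completeness produces a limit $x^* \in X$.

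The step I expect to be the main obstacle is identifying $x^*$ as a common fixed point, since unlike the Kannan case the Chatterjea cross terms yield a \emph{recursive} rather than one-shot estimate. Specialising \eqref{condchat1relaxed} to $i = n$, $j = m$, $x = x_{n-1}$, $y = x^*$ and pushing through homogeneity and sub-additivity produces
\[
F(D(x_n, T_m(x^*))) \leq \delta_{n,m}^s\bigl[F(D(x_{n-1}, T_m(x^*))) + F(D(x_n, x^*))\bigr].
\]
Writing $a_n := F(D(x_n, T_m(x^*)))$ and $b_n := F(D(x_n, x^*)) \to 0$, hypothesis (i) supplies $\delta_{n,m}^s \leq \beta < 1$ for $n$ large; a direct iteration $a_{N+k} \leq \beta^k a_N + \beta \sum_{j=1}^k \beta^{k-j} b_{N+j}$ then drives $a_n \to 0$, whence the $K$-triangle inequality gives $T_m(x^*) = x^*$. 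For uniqueness, a second common fixed point $y^*$ would satisfy $F(D(x^*,y^*)) \leq 2^s \delta_{i,j}^s F(D(x^*,y^*)) - \gamma_{i,j}^s F(\psi[D(x^*,y^*), D(x^*,y^*)])$, and picking a pair $(i,j)$ with $2^s \delta_{i,j}^s < 1$ (available because $C_n \to 0$ prevents the ratios $\delta_{i,i+1}^s/(1-\delta_{i,i+1}^s)$ from staying $\geq 1$ indefinitely) forces $D(x^*, y^*) = 0$.
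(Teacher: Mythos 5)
The paper never actually writes a proof of Theorem \ref{chat1relaxed}: it is stated as the ``relaxed'' variant of Theorem \ref{chat1}, whose own proof is in turn declared to be ``exactly the same'' as that of Theorem \ref{main}. So your proposal cannot be matched line by line against an official argument; your overall route --- Picard iterates, a per-step contraction giving $F(D(x_n,x_{n+1}))\le C_n\,F(D(x_0,x_1))$, summability of $C_n$ for the Cauchy property, then a separate argument at the limit --- is exactly the intended one. In fact your treatment of the limit point is more honest than anything in the paper: you correctly observe that, unlike the Kannan case, the term $D(x^*,T_m(x^*))$ does not reappear on the right-hand side with coefficient $\delta^s_{n,m}$, so one genuinely needs the recursion $a_n\le\beta(a_{n-1}+b_n)$ with $b_n\to 0$, and that uniqueness is no longer the trivial ``$F(0)+F(0)$'' computation of Theorem \ref{main}.

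There is, however, a concrete gap in your very first estimate. After the cross term $D(x_n,T_n(x_{n-1}))=D(x_n,x_n)=0$ collapses, you are left with $F(D(x_n,x_{n+1}))\le\delta^s_{n,n+1}F(D(x_{n-1},x_{n+1}))$, and to close the recursion you must split $D(x_{n-1},x_{n+1})$ using the metric-type inequality (D3), which carries the constant $K\ge 1$. Homogeneity and sub-additivity then give $F(D(x_{n-1},x_{n+1}))\le K^s[F(D(x_{n-1},x_n))+F(D(x_n,x_{n+1}))]$, so the per-step ratio is $\frac{K^s\delta^s_{n,n+1}}{1-K^s\delta^s_{n,n+1}}$ (and one must first know $K^s\delta^s_{n,n+1}<1$), not $\frac{\delta^s_{n,n+1}}{1-\delta^s_{n,n+1}}$. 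Hence the $C_n$ of hypothesis (ii) do not bound $F(D(x_n,x_{n+1}))$ unless $K=1$; your claim that ``a single application of the triangle inequality'' yields the Kannan-style ratio silently assumes a genuine metric. (This defect is arguably inherited from the paper, which states the Chatterjea theorems with the Kannan constants, but a self-contained proof must either restrict to $K=1$ or restate (ii) with $K^s\delta^s_{i,i+1}$.) A second, smaller gap concerns uniqueness: you need a pair with $2^s\delta^s_{i,j}<1$, but from $C_n\to 0$ you can only extract some $i$ with $\delta^s_{i,i+1}<1/2$, giving $2^s\delta^s_{i,i+1}<2^{s-1}$ --- sufficient when $s\le 1$ (as in the paper's examples) but not when $s>1$, so that step needs an additional hypothesis or a different choice of the pair $(i,j)$.
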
		

Also, it is not necessary to point out that this relaxed condition can be applied to modify the hypotheses of Theorem \ref{chat2}.

\begin{corollary}
	In addition to hypotheses of Theorem \ref{chat1relaxed},
	suppose that for every $n \geq 1$, there exists $k(n) \geq 1 $ such
	that $a_{n,k(n)}< \frac{1}{2}$; then every $T_n$ has a unique fixed point in $X$.
\end{corollary}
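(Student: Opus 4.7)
The plan is to use Theorem \ref{chat1relaxed} to produce the unique common fixed point $x^{*}$ of the family $\{T_{n}\}$, and then to show that for each $n$ the extra hypothesis $\delta_{n,k(n)}<\tfrac{1}{2}$ forces any fixed point of $T_{n}$ to coincide with $x^{*}$. The interpretation I will use is the natural one: $a_{n,k(n)}$ in the statement stands for the coefficient $\delta_{n,k(n)}$ appearing in \eqref{condchat1relaxed}.

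First I invoke Theorem \ref{chat1relaxed} to obtain $x^{*}\in X$ with $T_{m}(x^{*})=x^{*}$ for every $m\geq 1$; in particular $x^{*}$ is a fixed point of $T_{n}$, and the only work is to rule out any second fixed point.

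Fix $n\geq 1$ and let $k=k(n)$ be given by hypothesis, so $\delta_{n,k}<\tfrac{1}{2}$. Let $y\in X$ satisfy $T_{n}(y)=y$; I may assume $y\neq x^{*}$ (otherwise there is nothing to prove), so that the contractive inequality \eqref{condchat1relaxed} is applicable. Applying it with $i=n$, $j=k$, first argument $y$ and second argument $x^{*}$, and using $T_{n}(y)=y$, $T_{k}(x^{*})=x^{*}$, together with the symmetry of $D$, I obtain
\begin{align*}
F(D(y,x^{*})) &= F(D(T_{n}(y),T_{k}(x^{*})))\\
&\leq F\!\bigl(\delta_{n,k}[D(y,x^{*})+D(x^{*},y)]\bigr) - F\!\bigl(\gamma_{n,k}\psi[D(y,x^{*}),D(x^{*},y)]\bigr).
\end{align*}
By the homogeneity of degree $s$ the first term equals $(2\delta_{n,k})^{s}F(D(y,x^{*}))$, and the second term is non-negative, so
\[
\bigl(1-(2\delta_{n,k})^{s}\bigr)F(D(y,x^{*}))\leq 0.
\]
Since $2\delta_{n,k}<1$ gives $(2\delta_{n,k})^{s}<1$, this forces $F(D(y,x^{*}))=0$; the condition $F^{-1}(0)=\{0\}$ then yields $D(y,x^{*})=0$, i.e.\ $y=x^{*}$.

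There is no real obstacle here; the argument is essentially a one-shot application of the contractive condition in which $x^{*}$ plays the role of the probe point. The only delicate feature is bookkeeping: one must plug $x^{*}$ into the \emph{second} slot of \eqref{condchat1relaxed} (so that the two Chatterjea cross-terms collapse to $D(y,x^{*})$ and $D(x^{*},y)$), and one must verify the premise $x\neq y$ of \eqref{condchat1relaxed} before invoking it, which is handled by the trivial case split $y=x^{*}$ versus $y\neq x^{*}$.
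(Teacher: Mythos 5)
Your proof is correct and follows essentially the same route as the paper: apply the Chatterjea-type inequality \eqref{condchat1relaxed} once, with the common fixed point $x^{*}$ in one slot and a putative fixed point of $T_{n}$ in the other, so that both cross-terms collapse to $D(y,x^{*})$ and the hypothesis $\delta_{n,k(n)}<\tfrac{1}{2}$ forces $D(y,x^{*})=0$. If anything, your write-up is more careful than the paper's, which silently drops $F$ from the inequality; your use of the degree-$s$ homogeneity to get the factor $(2\delta_{n,k})^{s}<1$, and your explicit check of the premise $x\neq y$ before invoking \eqref{condchat1relaxed}, fill in exactly the steps the paper leaves implicit.
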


\begin{remark}
	This corollary emphasizes on the subtile fact that in addition to the existence of a unique common fixed for the family $\{T_n\}$, each single map $T_n$ has a unique fixed point, which happens to be the common one. Also a similar corollary can be formulated\footnote{We shall suppose that for every $n \geq 1$, there exists a $k(n) \geq 1 $ such
		that $a_{n,k(n)}< \frac{1}{3}$.} in the relaxed form of Theorem \ref{chat2}.
\end{remark}

\begin{proof}
From Theorem \ref{chat1relaxed}, we know that the family $\{T_n\}$ has a
unique common fixed point $x^* \in X$. If $y^*$ is a fixed
point of a given $T_m$ then
\begin{align*}
	D(x^*,y^*) = D(T_{k(m)x^*,}T_my^*) & \leq a_{k(m),m}[D(x^*,T_my^*)+ D(y^*,T_{k(m)}x^*)] \\
	& = a_{k(m),m}[D(x^*,y^*)+ D(y^*,x^*)] \\
	& < D(x^*,y^*),
	\end{align*}

which implies $D(x^*,y^*) =0 $; which gives
the desired result.

\end{proof}

\begin{example}
We endow the set $X=[0,1]$ with the usual distance $d(x,y)= |x-y|$ for $x,y,\in X$. Then $(X,d)$ is a complete metric space. We set $F(x)=x$ whenever $x\in X$, then $F \in \Phi$ homogeneous with degree 1. For $n\geq$ we define the family $\{T_n\}$ of self-mappings on $X$ by

\[T_n= \left\{
\begin{array}{ll}
1 & : 0 < x \leq 1,\\
\frac{2}{3} + \frac{1}{n+2} & : x = 0.
\end{array}
\right.
\]

Using the notation of Theorem \ref{chat1relaxed}, we use the family of reals 
$$\delta_{i,j} = \frac{1}{3} + \frac{1}{|i-j|+6};\qquad \gamma_{i,j}=0.$$

Then clearly for each $j,$ 
$$\underset{i \to \infty}{\limsup}\ \delta_{i,j}^s <1, $$ 

and

$$C_n = \prod \limits_{i=1}^n \frac{\delta_{i,i+1}^s}{1-\delta_{i,i+1}^s} =  \left(\frac{10}{11}\right)^n ,$$
which is a convergent series.

Also it is a straightforward calculation to verify that 

\begin{align}
F(D(T_i(x),T_j(y))) \leq \  & F(\delta_{i,j}[D(x,T_j(y))+ D(y,T_i(x))])\\
&- F(\gamma_{i,j}\psi[D(x,T_j(y)),D(y,T_i(x))]) \nonumber
\end{align}

i.e. equivalently 

\begin{align}
D(T_i(x),T_j(y))\leq \  & \delta_{i,j}[D(x,T_j(y))+ D(y,T_i(x))].
\end{align}

One just has to consider the three possible cases: 

\begin{itemize}
	\item $x\in (0,1]$ and $y\in (0,1]$;
	\item $x\in (0,1]$ and $y=0$;
	\item $x=y=0$ with $i\neq j$.
\end{itemize}

So all the conditions of Theorem \ref{chat1relaxed} are satisfied and note that $x = 1$ is the only fixed point for the $T_n$'s.
\end{example}

	\section{Conclusion and future work}
Using the same idea as in the proofs of Theorem \ref{main} and Theorem \ref{main2}, one can establish the
following results.

\begin{theorem}

Let $(X,D,K)$ be a complete metric type space and $\{T_n\}$ be a sequence of self mappings on $X$. Assume that there exist two sequences $(a_n)$ and $(b_n)$ of elements of $X$ such 

\begin{align}
F(D(T_i^p(x),T_j^p(y))) \leq \  & F(\delta_{i,j}[D(x,T_i^p(x))+ D(y,T_j^p(y))])\\
&- F(\gamma_{i,j}\psi[D(x,T_i^p(x)),D(y,T_j^p(y))]) \nonumber
\end{align}

for $x,y\in X$ with $x\neq y,$ $p\geq 1,$ $0\leq \delta_{i,j},\gamma_{i,j}<1 , \ i,j = 1,2,\cdots ,$ and for some $F \in \Phi$ homogeneous with degree $s$, where $\delta_{i,j}=D(a_i,a_j)$, $\gamma_{i,j}=D(b_i,b_j)$, and $\psi:[0,\infty)^2 \to [0,\infty)$ is a continuous mapping such that $%
\psi(x,y)=0$ if and only if $x=y=0$. If the sequence $(s_n)$ where $s_i=\frac{\delta_{i,i+1}^s }{1-\delta_{i,i+1}^s}$ is a non-increasing $\lambda$-sequence of elements of $\mathbb{R}^+=[0,+\infty)$ endowed with the $\max$ metric, 
then $\{T_n\}$ has a unique common fixed point in $X$. 

\end{theorem}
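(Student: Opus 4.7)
The plan is to reduce the statement to Theorem \ref{main} applied to the iterates $U_n := T_n^p$, and then lift the common fixed point of $\{U_n\}$ back to $\{T_n\}$ using a short diagonal argument that exploits the vanishing of $\delta_{n,n}$ and $\gamma_{n,n}$.

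First I would observe that the displayed contractive inequality, together with the hypotheses on $F$, $\psi$, the constants $\delta_{i,j}$, $\gamma_{i,j}$, and the $\lambda$-sequence $(s_n)$, is literally the hypothesis of Theorem \ref{main} with each $T_n$ replaced by $T_n^p$. Hence Theorem \ref{main} directly produces a unique $x^* \in X$ with $T_n^p(x^*) = x^*$ for every $n \geq 1$.

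Next I would promote $x^*$ to a fixed point of each individual $T_n$. Fix $n$ and set $y^* := T_n(x^*)$. Because $T_n$ commutes with its own iterate,
$$T_n^p(y^*) \;=\; T_n^p(T_n(x^*)) \;=\; T_n(T_n^p(x^*)) \;=\; T_n(x^*) \;=\; y^*.$$
If $y^* \neq x^*$, apply the hypothesis with $i = j = n$, $x = x^*$, $y = y^*$. Since $\delta_{n,n} = D(a_n,a_n) = 0$ and $\gamma_{n,n} = D(b_n,b_n) = 0$, the right-hand side collapses to $F(0) - F(0) = 0$, so $F(D(x^*,y^*)) \leq 0$; combined with $F^{-1}(0) = \{0\}$ this forces $D(x^*,y^*) = 0$, a contradiction. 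Hence $T_n(x^*) = x^*$ for every $n$. For uniqueness, any common fixed point $z$ of $\{T_n\}$ automatically satisfies $T_n^p(z) = z$ for every $n$, so the uniqueness clause of Theorem \ref{main} applied to $\{T_n^p\}$ forces $z = x^*$.

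The only non-formal step, and hence the main obstacle, is the passage from ``common fixed point of $\{T_n^p\}$'' to ``common fixed point of $\{T_n\}$''; the trick above handles it without repeating any of the Picard/$\lambda$-sequence machinery, because the constants $\delta_{n,n}$ and $\gamma_{n,n}$ degenerate to zero along the diagonal and $T_n$ always commutes with $T_n^p$. All the analytic work, the $\lambda$-sequence estimates and the Cauchy-sequence argument, is imported verbatim from Theorem \ref{main} via the substitution $T_n \leadsto T_n^p$, so no new calculation is needed.
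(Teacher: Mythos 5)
Your proposal is correct, and it is genuinely different from what the paper does: the paper gives no proof at all for this statement, merely asserting that it follows ``using the same idea as in the proofs of Theorem \ref{main}'' --- i.e.\ by rerunning the Picard/$\lambda$-sequence machinery with $T_n^p$ in place of $T_n$. That sketch, taken literally, only produces a unique common fixed point of the iterates $\{T_n^p\}$, and the paper never addresses how to descend from there to a common fixed point of $\{T_n\}$ itself. Your argument supplies exactly that missing step, and does so economically: you invoke Theorem \ref{main} as a black box for $U_n=T_n^p$, then observe that $y^*=T_n(x^*)$ is again a fixed point of $T_n^p$ (since $T_n$ commutes with its own iterate), and kill the possibility $y^*\neq x^*$ by feeding $i=j=n$, $x=x^*$, $y=y^*$ into the contractive inequality, where $\delta_{n,n}=D(a_n,a_n)=0$ and $\gamma_{n,n}=0$ collapse the right-hand side to $F(0)-F(0)=0$ and $F^{-1}(0)=\{0\}$ forces $D(x^*,y^*)=0$. (Note the standard single-map trick --- ``$T x^*$ is a fixed point of $T^p$, hence equals $x^*$ by uniqueness'' --- does not transfer directly here, because $T_n(x^*)$ is only known to be fixed by $T_n^p$ and not by the whole family $\{T_m^p\}$; your diagonal use of the hypothesis is what circumvents this.) Uniqueness is correctly inherited from Theorem \ref{main} since any common fixed point of $\{T_n\}$ is one of $\{T_n^p\}$. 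The only caveat, which applies equally to the paper's own arguments, is that the step $D(x^*,y^*)=0\Rightarrow x^*=y^*$ uses a separation property not literally listed in the paper's definition of a metric type space, but which the paper assumes implicitly throughout. Your route buys a complete proof where the paper has only a claim, at the cost of no new analytic work.
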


	\begin{theorem}
	Let $(X,D,K)$ be a complete metric type space and $\{T_n\}$ be a sequence of self mappings on $X$. Assume that there exist two sequences $(a_n)$ and $(b_n)$ of elements of $X$ such 
	
	\begin{align}
	F(D(T_i^p(x),T_j^p(y))) \leq \  & F(\delta_{i,j}[D(x,T_i^p(x))+ D(y,T_j^p(y))+D(x,y)])\\
	&- F(\gamma_{i,j}\psi[D(x,T_i^p(x)),D(y,T_j^p(y)),D(x,y)]) \nonumber
	\end{align}
	
	for $x,y\in X$ with $x\neq y,$ $p\geq 1,$ $0\leq \delta_{i,j},\gamma_{i,j}<1 , \ i,j = 1,2,\cdots ,$ and for some $F \in \Phi$ homogeneous with degree $s$, where $\delta_{i,j}=D(a_i,a_j)$, $\gamma_{i,j}=D(b_i,b_j)$, and $\psi:[0,\infty)^3 \to [0,\infty)$ is a continuous mapping such that $%
	\psi(x,y,z)=0$ if and only if $x=y=z=0$. If the sequence $(s_n)$ where $s_i=\frac{2^s\delta_{i,i+1}^s }{1-\delta_{i,i+1}^s}$ is a non-increasing $\lambda$-sequence of elements of $\mathbb{R}^+=[0,+\infty)$ endowed with the $\max$ metric,
	then $\{T_n\}$ has a unique common fixed point in $X$. 
	
\end{theorem}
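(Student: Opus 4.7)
The plan is to apply Theorem \ref{main2} to the family of iterates $\{T_n^p\}_{n\ge 1}$, obtaining a unique common fixed point $x^\ast$, and then to deduce that $x^\ast$ is already a common fixed point of the original family $\{T_n\}$. The hypothesis of the statement was designed so that the work of re-running the entire $\lambda$-sequence Cauchy argument can be avoided.

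First, I would observe that the contractive inequality in the statement is literally inequality \eqref{condition2} of Theorem \ref{main2} with $T_i$ replaced by the self-map $S_i:=T_i^p$. Since the constants $\delta_{i,j},\gamma_{i,j}$, the function $F$ of degree $s$, and the test map $\psi$ all satisfy the hypotheses required there, and the $\lambda$-sequence assumption on $(s_n)$ with $s_i=\frac{2^{s}\delta_{i,i+1}^s}{1-\delta_{i,i+1}^s}$ is the same condition, Theorem \ref{main2} applies verbatim to $\{S_n\}$. This produces a unique point $x^\ast\in X$ with $T_n^p(x^\ast)=x^\ast$ for every $n\ge 1$.

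Second, I would argue that $T_m(x^\ast)=x^\ast$ for every $m$. Set $y^\ast:=T_m(x^\ast)$. Since $T_m$ commutes with $T_m^p$, we have $T_m^p(y^\ast)=T_m(T_m^p(x^\ast))=T_m(x^\ast)=y^\ast$, so $y^\ast$ is another fixed point of $T_m^p$. If $y^\ast\ne x^\ast$, then applying the contractive inequality with $i=j=m$, $x=y^\ast$, $y=x^\ast$ and using $\delta_{m,m}=D(a_m,a_m)=0$ and $\gamma_{m,m}=D(b_m,b_m)=0$ together with homogeneity of $F$, the right-hand side collapses to
\[
F\bigl(0\cdot[D(y^\ast,y^\ast)+D(x^\ast,x^\ast)+D(y^\ast,x^\ast)]\bigr)-F\bigl(0\cdot\psi[0,0,D(y^\ast,x^\ast)]\bigr)=0,
\]
while the left-hand side equals $F(D(y^\ast,x^\ast))\ge 0$. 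Since $F^{-1}(0)=\{0\}$, this forces $D(y^\ast,x^\ast)=0$, contradicting $y^\ast\ne x^\ast$. Hence $T_m(x^\ast)=x^\ast$.

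Uniqueness in the original family follows immediately: any common fixed point $z^\ast$ of $\{T_n\}$ is automatically a common fixed point of $\{T_n^p\}$, so by the uniqueness clause of Theorem \ref{main2} applied to $\{S_n\}$ we must have $z^\ast=x^\ast$. The only genuinely non-routine point is the passage from $p$-th iterate fixed points back to fixed points of the $T_m$'s, and that passage rests entirely on the observation $\delta_{m,m}=\gamma_{m,m}=0$ forced by axiom (D1); once that is noted, everything else is either a citation of Theorem \ref{main2} or the short argument above.
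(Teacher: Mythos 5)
Your proposal is correct, and it lands essentially where the paper intends to go: the paper offers no written proof of this statement, saying only that it follows by ``the same idea'' as Theorems \ref{main} and \ref{main2}. Your version is both cleaner and more complete. Re-running the Picard argument with $x_n=T_n^p(x_{n-1})$ (which is what ``the same idea'' amounts to) only yields a unique common fixed point of the iterates $T_n^p$; the paper never addresses the passage from fixed points of $T_n^p$ back to fixed points of $T_n$, and that is exactly the step your second paragraph supplies. Your mechanism for it --- set $y^*=T_m(x^*)$, observe $T_m^p(y^*)=T_m(T_m^p(x^*))=y^*$, then apply the contractive inequality with $i=j=m$ so that $\delta_{m,m}=D(a_m,a_m)=0$ and $\gamma_{m,m}=D(b_m,b_m)=0$ collapse the right-hand side to $F(0)-F(0)=0$ --- is legitimate under the hypotheses as stated, and is in fact the same device the paper itself uses (with $i=j=m$) in the uniqueness part of the proof of Theorem \ref{main}. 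The reduction to Theorem \ref{main2} as a black box applied to $S_i:=T_i^p$, followed by the observation that any common fixed point of $\{T_n\}$ is automatically one of $\{T_n^p\}$, disposes of existence and uniqueness with no new estimates. Two small caveats, neither of which is a gap relative to the paper's own standards: first, the final step $D(y^*,x^*)=0\Rightarrow y^*=x^*$ needs the separation property $D(u,v)=0\Rightarrow u=v$, which the paper's definition of a metric type space does not actually list but which its own proofs use implicitly; second, the $i=j$ instance of the hypothesis with $\delta_{i,i}=0$ forces each $T_m^p$ to be a constant map, which says more about how restrictive the theorem's hypotheses are than about your argument.
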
	
	
Moreover, the above two generalisations also apply to Theorem \ref{chat1} and Theorem \ref{chat2} respectively.	
	
\vspace*{0.5cm}	
	
Recently the so-called $C$-class functions, which was introduced by A.H.Ansari \cite{aha} in 2014 and covers a large class of contractive conditions, has been applied successfully in the generalization of certain contractive conditions.

We read
\begin{definition}
	\label{C-class}(\cite{aha}) A continuous function $f:[0,\infty
	)^{2}\rightarrow \mathbb{R}$ is called \textit{$C$-class} function if \ for
	any $s,t\in \lbrack 0,\infty ),$ the following conditions hold:
	
	(1) $f(s,t)\leq s$;
	
	(2) $f(s,t)=s$ implies that either $s=0$ or $t=0$.
	
	We shall denote by $\mathcal{C}$ the collection of $C$-class functions.	
\end{definition}

\begin{example}
	\label{C-class examp}(\cite{aha}) The following examples show that the class $%
	\mathcal{C}$ is nonempty:

	\begin{enumerate}
		\item $f(s,t)=s-t.$
		
		\item $f(s,t)=ms,$ for some $m\in (0,1).$
		
		\item $f(s,t)=\frac{s}{(1+t)^{r}}$ for some $r\in (0,\infty ).$
		
		\item $f(s,t)=\log (t+a^{s})/(1+t)$, for some $a>1.$
		
	\end{enumerate}

\end{example}	
	
Therefore, the authors plan to study, in another manuscript, the existence of common fixed points for a family of mappings $T_i: (X,D,K)\to (X,D,K), i=1,2,\cdots$, defined on a metric type space $(X,D,K)$, which satisfy :

	\begin{align}\label{aha}
	F(D(T_i(x),T_j(y))) \leq \  & f(F(\delta_{i,j}[D(x,T_i(x))+ D(y,T_j(y))+D(x,y)]),\\
	& F(\gamma_{i,j}\psi[D(x,T_i(x)),D(y,T_j(y)),D(x,y)])) \nonumber
	\end{align}
for $x,y\in X$ with $x\neq y,$ where 

\begin{enumerate}
	
	\item $f \in \mathcal{C}$,
	\item  $(a_n)$ and $(b_n)$ are sequences of elements of $X$ and $\delta_{i,j}=D(a_i,a_j)$, $\gamma_{i,j}=D(b_i,b_j)$ with $0\leq \delta_{i,j},\gamma_{i,j}<1 , \ i,j = 1,2,\cdots ,$
	\item $F \in \Phi$ homogeneous with degree $s$
	
	\item $\psi:[0,\infty)^3 \to [0,\infty)$ is a continuous mapping such that $%
	\psi(x,y,z)=0$ if and only if $x=y=z=0$.
\end{enumerate}

under the condition that $s_i=\frac{2^s\delta_{i,i+1}^s }{1-\delta_{i,i+1}^s}$ is a non-increasing $\lambda$-sequence of elements of $\mathbb{R}^+=[0,+\infty)$ endowed with the $\max$ metric.

Moreover, if the existence of a common fixed point is established in that setting, therefore Theorems \ref{main} and \ref{main2} become direct corollaries by just setting $f(s,t)=s-t$ and the same applies to their equivalent asymmetric formulations.

	\section*{Conflict of interest.}
	
	The authors declare that there is no conflict
	of interests regarding the publication of this article.


	\bibliographystyle{amsplain}

\end{document}